\newtheorem{thm}{Theorem}[section]
\newtheorem{lemma}[thm]{Lemma}
\newtheorem{prop}[thm]{Proposition}
\newtheorem{cor}[thm]{Corollary}
\theoremstyle{plain} % just in case the style had changed
\newcommand{\thistheoremname}{}
\newtheorem*{genericthm}{\thistheoremname}
\theoremstyle{definition}
\newtheorem{example}[thm]{Example}
\newtheorem{defn}[thm]{Definition}
\newtheorem{remark}[thm]{Remark}
\newcommand{\be}{\mathbf{e}}
\newcommand{\BC}{\mathbb{C}}
\newcommand{\BR}{\mathbb{R}}
\newcommand{\BQ}{\mathbb{Q}}
\newcommand{\BZ}{\mathbb{Z}}
\newcommand{\BN}{\mathbb{N}}
\newcommand{\cB}{\mathcal{B}}
\newcommand{\cC}{\mathcal{C}}
\newcommand{\cF}{\mathcal{F}}
\newcommand{\cH}{\mathcal{H}}
\newcommand{\cS}{\mathcal{S}}
\newcommand{\fg}{\mathfrak{g}}
\newcommand{\fs}{\mathfrak{s}}
\newcommand{\ft}{\mathfrak{t}}
\newcommand{\fA}{\mathfrak{A}}
\newcommand{\fj}{\mathfrak{j}}
\renewcommand{\d}{\delta}
\newcommand{\e}{\varepsilon}
\newcommand{\z}{\zeta}
\renewcommand{\k}{\kappa}
\newcommand{\ld}{\lambda}
\newcommand{\s}{\sigma}
\newcommand*{\defeq}{\mathrel{\vcenter{\baselineskip0.5ex \lineskiplimit0pt
                     \hbox{\scriptsize.}\hbox{\scriptsize.}}}%
                     =}
\let\latexcirc=\circ
\newcommand{\ccirc}{\mathbin{\mathchoice
  {\xcirc\scriptstyle}
  {\xcirc\scriptstyle}
  {\xcirc\scriptscriptstyle}
  {\xcirc\scriptscriptstyle}
}}
\newcommand{\xcirc}[1]{\vcenter{\hbox{$#1\latexcirc$}}}
\let\circ\ccirc
\DeclarePairedDelimiter{\abs}{\lvert}{\rvert}%
\DeclarePairedDelimiter{\norm}{\lVert}{\rVert}%
\DeclarePairedDelimiter{\angleb}{\langle}{\rangle}%
\DeclareMathOperator{\id}{id}
\DeclareMathOperator*{\res}{res}
\renewcommand{\mod}{\operatorname{mod }}
\DeclareMathOperator{\Aut}{Aut}
\DeclareMathOperator{\Stab}{Stab}
\DeclareMathOperator{\GL}{GL}
\DeclareMathOperator{\PGL}{PGL}
\DeclareMathOperator{\U}{U}
\DeclareMathOperator{\ord}{ord}
\newcommand{\inv}{^{-1}}
\newcommand{\nth}{^{\mathrm{th}}}
\newcommand{\ol}{\overline}
\newcommand{\restr}[1]{\big\vert_{#1}}
\newcommand{\SL}{{\operatorname{SL}_2(\BZ)}}
\newcommand{\PSL}{{\operatorname{PSL}_2(\BZ)}}
\newcommand{\qsl}[1]{{\operatorname{SL}_2(\BZ/#1 \BZ)}}
\newcommand{\smallmat}[1]{
    \left[\begin{smallmatrix}#1\end{smallmatrix}\right]
}
\newcommand{\tf}{\tilde{f}}
\def\O{{\operatorname{O}}}
\def\ind{{\operatorname{Ind}}}
\def\res{{\operatorname{Res}}}
\def\Res{{\res}}
\def\Ind{{\ind}}
\def\trho{{\tilde{\rho}}}
\title{On symmetric representations of $\SL$}
\author{Siu-Hung Ng} 
\address{Department of Mathematics, Louisiana State University, Baton Rouge, LA 70803.}
\email{rng@math.lsu.edu}
\thanks{The authors were partially supported by the NSF grant DMS 1664418.}
\author{Yilong Wang}
\address{Beijing Institute of Mathematical Sciences and Applications (BIMSA), Huairou, Beijing, China.}
\email{wyl@bimsa.cn}
\author{Samuel Wilson}
\address{Department of Mathematics, Louisiana State University, Baton Rouge, LA 70803.}
\email{swil311@lsu.edu}
\date{}
\begin{document}
\begin{abstract} We introduce the notions of symmetric and symmetrizable representations of $\SL$. The linear representations of $\SL$ arising from modular tensor categories are symmetric and have congruence kernel. Conversely, one may also reconstruct modular data from finite-dimensional symmetric, congruence representations of $\SL$. By investigating a $\BZ/2\BZ$-symmetry of some Weil representations  at prime power levels, we prove that all finite-dimensional congruence representations of $\SL$ are symmetrizable. We also provide examples of unsymmetrizable noncongruence representations of $\SL$ that are subrepresentations of a symmetric one.
\end{abstract}

\maketitle

\section{Introduction}
The group $\SL$ plays an integral role in the theory of modular forms. Representations of $\SL$ also appear naturally in rational conformal field theory (RCFT) and topological quantum field theory (TQFT).  In both of these theories, the representations arise from underlying modular tensor categories (MTC). Readers are referred to \cite{BK01, EGNO} for more details on modular tensor categories.  For their relations to RCFT, see \cite{MS89, Hu, Z}; for TQFT, see \cite{RT91, Tur10}.  MTCs also form the foundation for topological quantum computation and topological phases of matter, regarding which see \cite{LW04, Kit06, RW18}.

Associated to a modular tensor category $\cC$ is a pair of complex square matrices, $(S,T)$, called the modular data of $\cC$.  The group $\SL$ is generated by $\fs = \smallmat{0&1\\-1&0}$ and $\ft = \smallmat{1&1\\0&1}$, and the assignment $(\fs, \ft) \to (S^{-1},T)$ defines a projective representation\footnote{This convention of $\fs$ is adopted from \cite{Nobs1}; literature on MTCs usually uses the inverse.}.  This can be linearized to a unitary matrix representation $\rho$ where $\rho(\fs)$ is symmetric and $\rho(\ft)$ is diagonal. We call representations of $\SL$ with these properties \emph{symmetric}. Moreover, $\rho$ is \emph{congruence}, i.e.~has a congruence kernel \cite{NS10, DLN15}.

The family of \emph{pointed} MTCs, which can be built from finite abelian groups equipped with nondegenerate quadratic forms \cite{JS85, JS93}, are particularly relevant to this paper (see Example \ref{ex:mtc} and Section \ref{subsec:quadforms}). The projective representation arising from such a category coincides with the \emph{Weil representation} of the quadratic form; the study of Weil representations has a long history, including works such as \cite{Klo46,Weil64,Tan67}.

Given a congruence representation of $\SL$, it is natural to ask whether it can be realized by a MTC in this way, and if so, how to reconstruct the modular data.  It is clear that, for the representation to be realized, it is necessary for it to be \emph{symmetrizable}\,---\,that is, to admit a basis with respect to which it is symmetric.  Representations of $\SL$ that are not symmetrizable do exist (see Section \ref{subsec:examples}); however, our examples are all noncongruence representations.  The main result of this paper, Theorem \ref{thm:main}, is that every finite-dimensional congruence representation of $\SL$ is symmetrizable.

We prove this theorem by investigating irreducible representations of $\qsl{p^\ld}$ for primes $p$ and positive integers $\ld$, which were completely classified by Nobs--Wolfart using subrepresentations of Weil representations \cite{NW76}. The main thrust of the proof is the existence of a certain $\BZ/2\BZ$ symmetry, derived from an involutive automorphism, for each relevant quadratic form.  We show that the subspace associated to each irreducible subrepresentation is invariant under that symmetry; this implies the representation is symmetrizable.  Based on this proof, the authors have implemented a \texttt{GAP} package, \texttt{SL2Reps} \cite{SL2Reps1.0}, which automatically generates a symmetric basis for each irreducible congruence representation $\rho$ and outputs the corresponding matrices $\rho(\fs)$ and $\rho(\ft)$.  In fact, these symmetric, irreducible congruence representations are essential for the reconstruction of modular data from representations of $\SL$ \cite{NRWW}.  In special cases (see, for example, \cite[Sec.~5]{NWZ22} and \cite[Sec.~3]{Rank5}), $\rho(\fs)$ and $\rho(\ft)$ will completely determine the fusion rules of a potential MTC realizing $\rho$.

The paper is organized as follows. In Section \ref{sec:main}, we introduce symmetric and congruence representations of $\SL$ and provide some examples. In Section \ref{sec:symm}, we describe Weil representations in general and establish criteria for symmetrizability. Then, in Section \ref{sec:irreps}, we consider irreducible congruence representations of prime power level in detail and prove the main theorem. Finally, we give some applications of the result.

\section{Symmetric representations of \texorpdfstring{$\SL$}{}}\label{sec:main}

\subsection{Notation and definitions}\label{subsec:defn}

Let $\fs \defeq \smallmat{0&1\\-1&0}$ and $\ft \defeq \smallmat{1&1\\0&1}$, a choice of generators for the group $\SL$. For any real number $a \ge 0$, let $\sqrt{a}$ denote its nonnegative square root. The group of $n \times n$ unitary complex matrices is denoted by $\U(n)$. We write a complex number $u \in \U(1)$ of complex modulus 1 as $u = e^{ix}$ for some $x \in [\,0, 2\pi)$ and define $\sqrt{u} \defeq e^{ix/2}$. For any $r \in \BQ$, we write $\be(r) \defeq e^{2\pi i r}$, and for any positive integer $k$, we write $\zeta_k \defeq \be(1/k)$. In particular, $\sqrt{-1} = i = \be(1/4) = \zeta_4$ in our convention. Finally, we write $\big(\frac{k}{p}\big)$ for the Legendre symbol of $k$ mod $p$. All representations of $\SL$ considered are finite-dimensional over $\BC$.

\begin{defn}
    A unitary matrix representation $\rho: \SL \to \U(n)$ is called \emph{symmetric} if the following two conditions hold:
    \begin{itemize}
        \item $\rho(\fs)$ is symmetric;
        \item $\rho(\ft)$ is diagonal.
    \end{itemize}
    For any finite-dimensional Hilbert space $V$, a representation $\rho: \SL \to \GL(V)$ is called \emph{symmetrizable} if it is equivalent to a symmetric representation.  An equivalent condition is that $V$ admits an orthonormal basis, called a \emph{symmetric basis} for $\rho$, with respect to which the matrix presentation of $\rho$ is symmetric.
\end{defn}

\begin{remark}\label{rem:preservation}
\begin{itemize}
    \item[(i)] Any permutation of a symmetric basis for $\rho$ is also a symmetric basis for $\rho$.
    \item[(ii)] Symmetrizability is preserved under direct sum and tensor product of representations. Indeed, if $\rho_1$, $\rho_2$ admit symmetric bases $\cB_1$, $\cB_2$ respectively, then $\{(v_1, 0)\mid v_1\in\cB_1\}\cup\{(0,v_2)\mid v_2\in\cB_2\}$ and $\{v_1\otimes v_2 \mid v_1\in\cB_1, v_2\in\cB_2\}$ are symmetric bases for $\rho_1\oplus \rho_2$ and $\rho_1\otimes \rho_2$ respectively.
\end{itemize}
\end{remark}

\begin{defn}
    A finite-dimensional representation $\rho$ of $\SL$ is called \emph{congruence} of level $n$ if $\ker(\rho)$ is a congruence subgroup of level $n$.
\end{defn}

In particular, a congruence representation $\rho:\SL\to \GL(V)$ of level $n$ factors through $\qsl{n}$.  The level of $\rho$ is equal to the order of $\rho(\ft)$ \cite[Lem.~A.1]{DLN15}.

\subsection{Examples of symmetrizable and unsymmetrizable representations}\label{subsec:examples}

\begin{example}\label{ex:mtc}
As mentioned in the introduction, a family of representations of $\SL$ may be obtained from any modular tensor category as follows. Let $\cC$ be a modular tensor category\,---\,that is, a braided fusion category equipped with a ribbon structure whose braiding satisfies a nondegeneracy condition (we refer the reader to \cite{BK01, EGNO, Tur10} for details). The ribbon structure on $\cC$ induces a trace on endomorphisms, and by taking the traces of double braidings and twists of simple objects, one obtains the modular data $(S,T)$, a pair of complex matrices indexed by the (finite) set of isomorphism classes of simple objects in $\cC$. With respect to this natural basis, $S$ is symmetric and $T$ is diagonal \cite[Chap.~3]{BK01}.

Let $r$ be the number of isomorphism classes of simple objects of $\cC$. It is well-known (see, for example, \cite{Tur10, BK01}) that the assignment $(\fs, \ft) \to (S^{-1},T)$ defines a projective representation $\tilde{\rho}_\cC : \SL \to \PGL_r(\BC)$, which can be lifted to a linear representation of $\SL$ by scaling $S$ and $T$, and there are 12 distinct such lifts \cite[Thm.~7.1]{NS10}. Let $\rho_\cC:\SL\to\GL_r(\BC)$ be any of these linear lifts of $\tilde{\rho}_\cC$.  By the discussion in the previous paragraph, $\rho_\cC$ is a symmetric representation. Further, by \cite[Thm.~II]{DLN15}, $\ker(\rho_\cC)$ is a congruence subgroup of $\SL$. Thus, $\rho_\cC$ is a symmetric congruence representation of $\SL$.
\end{example}

For the next example, we will use the following lemma.

\begin{lemma} \label{lem:non-sym}
Let $\rho:\SL\to\U(n)$ be a representation such that $\rho(\ft)$ is diagonal. Denote $s = \rho(\fs)$ and $t = \rho(\ft)$.  If $\rho$ is symmetrizable, then for any three indices $j,k,\ell \in \{1,\dots,n\}$ such that the eigenvalues $t_{j,j}$, $t_{k,k}$, and $t_{\ell,\ell}$ of $\rho(\ft)$ all have multiplicity 1, we have $s_{j,k} \cdot s_{k,\ell} \cdot s_{\ell,j} = s_{j,\ell} \cdot s_{\ell,k} \cdot s_{k,j}$.
\end{lemma}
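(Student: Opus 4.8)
The plan is to realize the triple product $s_{j,k}s_{k,\ell}s_{\ell,j}$ as a quantity that is invariant under the type of basis change relating $\rho$ to a symmetric representation, and then to observe that for a symmetric matrix the two triple products in the statement are manifestly equal. The conceptual point is that a product of off-diagonal entries taken around a closed cycle of indices is a ``gauge invariant'': the phase ambiguities cancel.

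First I would record this gauge invariance explicitly. If $D = \diag(d_1,\dots,d_n)$ is a diagonal unitary matrix and $s' = DsD^{-1}$, then $s'_{a,b} = d_a\bar d_b\,s_{a,b}$, so that
$$s'_{j,k}s'_{k,\ell}s'_{\ell,j} = (d_j\bar d_k)(d_k\bar d_\ell)(d_\ell\bar d_j)\,s_{j,k}s_{k,\ell}s_{\ell,j} = s_{j,k}s_{k,\ell}s_{\ell,j},$$
since every phase cancels with its conjugate; the identical cancellation holds for the reversed cycle $s_{j,\ell}s_{\ell,k}s_{k,j}$.

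The second and main step is to show that passing from $\rho$ to a symmetric representation acts, on the multiplicity-one indices, exactly like such a diagonal gauge transformation (up to a harmless permutation of indices). By symmetrizability there is a unitary $P$ with $P\rho(g)P^{-1}$ symmetric for all $g$; write $s' = PsP^{-1}$ (symmetric) and $t' = PtP^{-1}$ (diagonal). The intertwining relation $Pt = t'P$ forces $P$ to carry each eigenspace of $t$ into the eigenspace of $t'$ for the same eigenvalue, and since $t$ and $t'$ are similar they share eigenvalues with multiplicities. Thus for any index $j$ with $t_{j,j}$ of multiplicity one, the one-dimensional line $\BC e_j$ is carried to a coordinate line $\BC e_{j'}$, i.e. $Pe_j = u_j e_{j'}$ for a unique index $j'$ with $t'_{j',j'} = t_{j,j}$ and a phase $u_j \in \U(1)$. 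A direct computation using $P^* e_{j'} = \bar u_j e_j$ then gives $s'_{j',k'} = u_j\bar u_k\,s_{j,k}$ for all multiplicity-one indices $j,k$.

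Finally I would combine the two steps: the cancellation above yields $s'_{j',k'}s'_{k',\ell'}s'_{\ell',j'} = s_{j,k}s_{k,\ell}s_{\ell,j}$ and likewise $s'_{j',\ell'}s'_{\ell',k'}s'_{k',j'} = s_{j,\ell}s_{\ell,k}s_{k,j}$, while symmetry of $s'$ gives $s'_{j',k'}s'_{k',\ell'}s'_{\ell',j'} = s'_{j',\ell'}s'_{\ell',k'}s'_{k',j'}$; chaining these produces the claimed identity. The main obstacle is the eigenspace-tracking argument of the middle step. It is essential here that the multiplicity-one hypothesis makes each relevant eigenspace one-dimensional, so that $P$ can do nothing but rescale and permute the corresponding coordinate vectors; the resulting permutation $j\mapsto j'$ is innocuous precisely because it affects both triple products in the same way and therefore drops out of the final equality.
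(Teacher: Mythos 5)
Your proof is correct and follows essentially the same route as the paper's: both arguments show that the symmetrizing unitary, on coordinates with multiplicity-one $\ft$-eigenvalues, can only act as a permutation composed with phases, and that these phases cancel in the cyclic triple products. The only cosmetic difference is that the paper absorbs the permutation at the outset (choosing a permutation matrix $P$ so that $t^{AP}=t$ and invoking block-diagonality of the commutant), whereas you track the permutation $j \mapsto j'$ explicitly and phrase the cancellation as gauge invariance of cycle products.
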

\begin{proof}
For $A \in \GL_n(\BC)$ and $B \in \operatorname{M}_n(\BC)$, we write $B^A \defeq A^{-1}BA$. Suppose that $\rho$ is symmetrizable. Then there exists a unitary matrix $A$ such that $s^A$ is symmetric and $t^A$ is diagonal. As $t^A$ and $t$ are diagonal and have the same eigenvalues, there is a permutation matrix $P$ such that $t^{AP} = t$. Denote $U \defeq AP$. Then $Ut = tU$, so $U$ is a unitary block-diagonal matrix; the blocks correspond to the distinct eigenvalues of $t$, and each has size equal to the corresponding multiplicity. In particular, since $t_{j,j}$ is of multiplicity 1, there must be some $u_j \in \U(1)$ such that $U_{i,j} = \delta_{ij} u_j$ for all $1 \leq i \leq n$. The same holds for $k$ and $\ell$.

Now, $s^U$ is symmetric. So, $\ol{u}_j u_k  s_{j,k} = (s^U)_{j,k}= (s^U)_{k,j} = \ol{u}_k u_j s_{k,j}$ and hence $u_k^2 s_{j,k} = u_j^2 s_{k,j}$. Similarly, we have $u_\ell^2 s_{k,\ell}  = u_k^2 s_{\ell,k}$ and $u_j^2 s_{\ell, j}  = u_\ell^2 s_{j, \ell}$, and the statement follows immediately.
\end{proof}

\begin{example}\label{ex:noncongruence}
Following \cite{FF20}, we consider the four homomorphisms from $\SL$ to the permutation group $S_7$ shown in Table \ref{tab:noncong}.

\begin{table}[ht]
    \centering
    $\def\arraystretch{1.2}
    \begin{array}{|c|c|c|c|c|}
    \hline
     & {\phi_1} & {\phi_2} & {\phi_3} & {\phi_4} \\
    \hline\hline
    \fs & (12)(34)(56) & (12)(34)(56) & (12)(34)(67) & (12)(34)(67) \\
    \hline
    \ft & (1245)(367) & (12475)(36) & (124735) & (125473)\\
    \hline
    \abs{\operatorname{Im}(\phi_j)} & 7! & 7! & 42 & 42\\
    \hline
    n_k  & 12 & 10 & 6 & 6\\ 
    \hline
    \abs{\qsl{n_k}}& 2^7 3^2 & 2^4 3^2 5 & 2^4 3^2 & 2^4 3^2\\
    \hline
    \end{array}$
    \caption{Four noncongruence permutation representations of $\SL$.}
    \label{tab:noncong}
\end{table}

Notice that, for each $k$, $\abs{\operatorname{Im}(\phi_k)}$ does not divide $\abs{\qsl{n_k}}$, where $n_k = \ord(\phi_k(\ft))$. Therefore, the homomorphism $\phi_k$ has a noncongruence kernel. Further, let $\rho: S_7 \to \U(7)$ be the permutation representation of $S_7$ on $V=\BC^7$, and let $\{e_j\}_{i=1}^7$ denote the standard basis of $V$. Since $\rho$ is faithful, $\ker(\phi_k) = \ker(\rho\circ\phi_k)$, so $\rho\circ\phi_k$ is a noncongruence representation of $\SL$.  For brevity, we write $\rho_k \defeq \rho \circ \phi_k$, and view $\ft$ as a permutation in $S_7$, namely $\phi_k(\ft)$ for the relevant choice of $k$.

The representations $\rho_1$ and $\rho_2$ thus constructed are symmetrizable. It is clear that the set
\[\scriptstyle
\cB_1 \defeq \Big\{
\sum\limits_{a=0}^{3} e_{\ft^a(1)},\,
\ol \zeta_8 \sum\limits_{a=0}^{3} i^{a} e_{\ft^{a}(1)},\,
-i \sum\limits_{a=0}^{3} (-1)^{a} e_{\ft^{a}(1)},\,
\zeta_8\sum\limits_{a=0}^{3} (-i)^{a} e_{\ft^{a}(1)},\,
\sum\limits_{a=0}^{2} e_{\ft^{a}(3)},\,
\sum\limits_{a=0}^{2} \z_3^a e_{\ft^{a}(3)},\,
\sum\limits_{a=0}^{2} \z_3^{2a} e_{\ft^{a}(3)}
\Big\}\]
is an orthogonal eigenbasis for $\rho_1(\ft)$. One can check directly that the normalization of $\cB_1$ is a symmetric basis for $\rho_1$. Similarly, the normalization of the orthogonal basis
\[\scriptstyle\Big\{
\sum\limits_{a = 0}^{4} e_{\ft^a(1)},\,
-\zeta_5^2\sum\limits_{a = 0}^{4} \z_5^a e_{\ft^a(1)},\, 
\ol\zeta_5\sum\limits_{a = 0}^{4} \z_5^{2a} e_{\ft^a(1)},\,
\zeta_5 \sum\limits_{a = 0}^{4} \z_5^{3a} e_{\ft^a(1)},\,
-\zeta_5^3\sum\limits_{a = 0}^{4} \ol\zeta_5^{a} e_{\ft^a(1)},\, \sum\limits_{a=0}^{1}e_{\ft^a(3)},\,
-i\sum\limits_{a=0}^{1}(-1)^a e_{\ft^a(3)}
\Big\}\]
is a symmetric basis for $\rho_2$.

On the other hand, the representations $\rho_3$ and $\rho_4$ are not symmetrizable. Consider the ordered eigenbasis $\cB_3 \defeq \{v_1, \dots, v_7\}$ for $\rho_3(\ft)$  given by
\[
    v_1 \defeq e_6, \qquad
    v_i \defeq \frac{1}{\sqrt{6}}\sum_{a=0}^5 \zeta_6^{(i-2)a} e_{\ft^a(1)} \quad \text{for } i \in \{2,\dots,7\}.
\]
Let $s= \rho_3(\fs)$ and $s(v_j) = \sum_{j=1}^7 s_{i,j} v_i$ for $i, j \in \{ 1, \dots, 7 \}$. The eigenvectors $v_3, v_4, v_5$ of $\rho_3(\ft)$ have eigenvalues of multiplicity 1, and
\[
    s_{3,4}=\ol{s_{4,3}} = \frac{5-\sqrt{3}i}{12}\,, \qquad
    s_{3,5} = s_{4,5} = \ol{s_{5,3}} = \ol{s_{5,4}} = -\frac{2+\sqrt{3}i}{6}\,.
\]
In particular, $s_{3,4} \cdot s_{4,5} \cdot s_{5,3} \neq s_{3,5} \cdot s_{5,4} \cdot s_{4,3}$. It follows from Lemma \ref{lem:non-sym} that $\rho_3$ is not symmetrizable. The same argument may be applied to show that $\rho_4$ is not symmetrizable either.
\end{example}

A symmetrizable representation can be obtained from any real orthogonal representation of $\PSL$ via the induction functor $\Ind_{\SL}^{\GL_{2}(\BZ)}$. The group $\GL_{2}(\BZ)$ is a semidirect product: $\GL_2(\BZ) = \SL \rtimes \angleb{\fj}$, where $\fj = \smallmat{1 & 0 \\ 0 & -1}$.  Conjugation by $\fj$ defines an automorphism $\s$ of $\SL$, and
\[
    \s(\fs) = \fs^{-1}\qquad \text{and}\qquad \s(\ft) = \ft^{-1}\,.  
\]
For any representation $\rho:\SL \to \GL_n(\BC)$, let $\tilde{\rho} \defeq \Res_{\SL} \Ind_{\SL}^{\GL_{2}(\BZ)} \rho$, the restricted induced representation of $\rho$. As per \cite{Mac51}, we have $\trho \cong \rho \oplus (\rho\circ \s)$. In particular, for any $x,y \in V$,
\[
    \trho(\fs)(x,y) = (\rho(\fs)x,\rho(\fs)^{-1}y) \qquad \text{and} \qquad \trho(\ft)(x,y) = (\rho(\ft)x,\rho(\ft)^{-1}y)\,.
\]
To simplify notation, given any representation $\eta: \PSL\to\GL_n(\BC)$, we again use $\eta$ to denote the representation $\SL \to \PSL \xrightarrow{\eta} \GL_n(\BC)$.

\begin{prop}\label{prop:ind}
Let $\rho$ be any representation $\PSL \to \O(n)$, where $\O(n) = \U(n) \cap \GL_{n}(\BR)$ is the group of orthogonal matrices. Then $\trho$ is symmetrizable.
\end{prop}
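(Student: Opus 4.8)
The plan is to produce, directly and by hand, an orthonormal basis of $W = \BC^n\oplus\BC^n$ that simultaneously diagonalizes $\trho(\ft)$ and renders $\trho(\fs)$ symmetric. Write $S = \rho(\fs)$ and $T = \rho(\ft)$; by hypothesis $\rho$ lands in $\O(n)$, so $S,T$ are real orthogonal, whence $S^{-1} = S^{T}$ and $T^{-1} = T^{T}$. Recall from the formulas preceding the statement that $\trho(\fs) = \diag(S, S^{-1})$ and $\trho(\ft) = \diag(T, T^{-1})$. I would begin by fixing an orthonormal eigenbasis $w_1,\dots,w_n$ of $T$ with $T w_j = \lambda_j w_j$, $\abs{\lambda_j} = 1$; since $T$ is real, the conjugates $\ol{w}_1,\dots,\ol{w}_n$ form a second orthonormal eigenbasis, with $T\ol{w}_j = \ol{\lambda}_j\ol{w}_j$.

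The key step is the choice of adapted basis. Set $p_j \defeq (w_j, 0)$ and $q_j \defeq (0, \ol{w}_j)$, an orthonormal basis of $W$. The crucial observation is that $p_j$ and $q_j$ carry the \emph{same} $\trho(\ft)$-eigenvalue: indeed $\trho(\ft)p_j = \lambda_j p_j$, while $\trho(\ft)q_j = (0, T^{-1}\ol{w}_j) = \lambda_j q_j$ because $T^{-1}\ol{w}_j = \ol{\lambda}_j^{-1}\ol{w}_j = \lambda_j \ol{w}_j$. Hence $\trho(\ft)$ is diagonal in this basis, acting as the scalar $\lambda_j$ on each plane $\Pi_j \defeq \mathrm{span}(p_j, q_j)$. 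A short computation of $\trho(\fs)$ — using that $S$ is real to rewrite $\angleb{\ol{w}_j, S^{-1}\ol{w}_k} = \angleb{\ol{w}_j, S^{T}\ol{w}_k} = \angleb{w_k, Sw_j}$ — shows there is no $p$–$q$ coupling, and that, setting $\hat S_{jk} \defeq \angleb{w_j, S w_k}$, the block of $\trho(\fs)$ from $\Pi_k$ to $\Pi_j$ is $\diag(\hat S_{jk}, \hat S_{kj})$. Thus the $(\Pi_j,\Pi_k)$ and $(\Pi_k,\Pi_j)$ blocks are $\diag(\hat S_{jk},\hat S_{kj})$ and $\diag(\hat S_{kj},\hat S_{jk})$: related by swapping the two diagonal entries, which is precisely the obstruction to symmetry arising from the $S \leftrightarrow S^{-1} = S^{T}$ asymmetry between the two copies.

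The remaining point, which I expect to be the only real obstacle, is to rotate within each plane so as to convert this ``diagonal-swap'' relation into genuine symmetry while leaving the scalar action of $\trho(\ft)$ on $\Pi_j$ untouched. The correct rotation is the single $2\times2$ unitary $U_0 \defeq \tfrac{1}{\sqrt2}\bmat{1 & i\\ i & 1}$, applied identically on every $\Pi_j$; since $\trho(\ft)$ is scalar on each plane, conjugation by $\bigoplus_j U_0$ commutes with $\trho(\ft)$ and preserves its diagonal form. Its defining property, verified by direct computation, is the identity
\[
    U_0^{*}\,\diag(a,b)\,U_0 = (\,U_0^{*}\,\diag(b,a)\,U_0\,)^{T}
\]
valid for all scalars $a,b$. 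Taking $a = \hat S_{jk}$ and $b = \hat S_{kj}$ shows that the transformed $(\Pi_j,\Pi_k)$ block equals the transpose of the transformed $(\Pi_k,\Pi_j)$ block (and that each diagonal block, where $a=b$, is a scalar hence symmetric), so the conjugated $\trho(\fs)$ is symmetric. This produces a symmetric basis and proves $\trho$ symmetrizable. I would stress that the naive symmetric/antisymmetric combinations $\tfrac{1}{\sqrt2}(p_j \pm q_j)$ do \emph{not} work — they yield an antisymmetric cross-coupling — and that the ``complex'' mixing $U_0$ is exactly what interchanges transposition with the built-in diagonal swap of $\diag(S, S^{-1})$. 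Finally, multiplicities in the spectrum of $T$ cause no trouble, since $U_0$ is applied uniformly plane-by-plane irrespective of coincidences among the $\lambda_j$.
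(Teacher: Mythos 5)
Your proposal is correct and takes essentially the same route as the paper: both constructions pair each $\rho(\ft)$-eigenvector $w_j$ in the first summand with its conjugate $\ol{w}_j$ in the second (using that $T$ is real, so they carry the same $\trho(\ft)$-eigenvalue), use $S^{-1}=S^{\top}$ with $S$ real to see that the second copy's block is the transpose of the first's, and then fix the resulting swap by taking \emph{complex} rather than real combinations inside each plane. In fact your basis is the paper's basis $\big\{\tfrac{1}{\sqrt{2}}\big(\sqrt{\e}\,v_j,\ol{\sqrt{\e}\,v_j}\big) \,:\, \e\in\{\pm1\}\big\}$ in disguise\,---\,choosing $v_j = e^{-i\pi/4}w_j$ makes it agree with yours up to an overall phase $e^{i\pi/4}$, which changes no matrix entries\,---\,so the only genuine difference is presentational: you verify symmetry by conjugating block matrices with $\bigoplus_j U_0$, while the paper verifies it via the inner-product identity \eqref{eq:2.7-1}.
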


\begin{proof}
By assumption, $\rho(\fs) = \rho(\fs)^\top$. Hence, for any $x, y \in \BC^n$, 
\[
    \angleb{\rho(\fs) \ol{x}, \ol{y}} = \ol{\angleb{\rho(\fs)x, y}} = \angleb{y, \rho(\fs)x} = \angleb{\rho(\fs)y, x}.
\]
Consequently, we have
\begin{equation}\label{eq:2.7-1}
    \angleb{\trho(\fs)(x,\ol{x}), (y, \ol{y})} = \angleb{\rho(\fs)x, y} + \angleb{\rho(\fs)\ol{x}, \ol{y}} = \angleb{\rho(\fs) \ol y, \ol x} + \angleb{\rho(\fs)y, x} = \angleb{\trho(\fs)(y,\ol{y}), (x, \ol{x})}\,.
\end{equation}

Now we construct a symmetric basis for $\trho$. Since $\rho(\ft)$ is orthogonal, and in particular normal, there exists an orthonormal eigenbasis for $\rho(\ft)$, denoted by $\{v_j\}_{j = 1}^{n}$.  Let $\ld_j \in \U(1)$ be the eigenvalue for $v_j$. Then $\rho(\ft)^{-1}\ol{v_j} = \ol{\rho(\ft)^{-1} v_j} = \ld_j \ol{v_j}$. As such,
\[
\cB_{\trho} \defeq \Big\{\frac{1}{\sqrt{2}}\Big(\sqrt{\e}v_j, \ol{\sqrt{\e}v_j}\Big)~\Big\vert~\e \in \{\pm 1\}, 1 \le i \le n \Big\}
\]
is an orthonormal eigenbasis for $\trho(\ft)$. Finally, $\trho(\fs)$ is symmetric with respect to $\cB_{\trho}$ by \eqref{eq:2.7-1}.
\end{proof}

\begin{example}\label{ex:nonsymmsub}
Subrepresentations of a symmetric representation may fail to be symmetrizable. Indeed, $\rho_3$ in Example \ref{ex:noncongruence} fulfils the condition of Proposition \ref{prop:ind}, so $\tilde\rho_3$ is symmetric. However, $\trho_3$ contains $\rho_3$, which is not symmetrizable, as a subrepresentation.  In fact, $\rho_3 \circ \s$ is not symmetrizable either (by a similar argument to that in Example \ref{ex:noncongruence}). Notably, since $\rho: S_7 \to \U(7)$ is faithful, $\ker(\trho_3) = \ker(\phi_3) \cap \s(\ker (\phi_3))$ is not a congruence subgroup of $\SL$. 
\end{example}

If a subrepresentation of a symmetric representation admits additional symmetry, then it is symmetrizable. The following lemma will be used in the subsequent sections.

\begin{lemma}\label{lem:sym-1}
Let $\eta: \SL \to \U(n)$ be a symmetric representation. Suppose $U\in\U(n)$ commutes with $\eta(\fg)$ for all $\fg \in \SL$. Let $\varphi(x) = Ux$ and $\ol \varphi(x) = \ol{U x}$ for $x \in \BC^n$; note that $\ol\varphi$ is an antilinear operator. Then:
\begin{itemize}
\item[(i)] 
for any $x, y \in \BC^n$, we have $\angleb{\eta(\fs)x,\,y} = \angleb{\eta(\fs)\ol{\varphi}(y),\,\ol{\varphi}(x)}$.
\item[(ii)]
If  $\rho$ is a subrepresentation of $\eta$ and there exists an orthonormal eigenbasis $\cS$ for $\rho(\ft)$ that is fixed by $\ol\varphi$ pointwisely, then $\cS$ is a symmetric basis for $\rho$.
\end{itemize}
\end{lemma}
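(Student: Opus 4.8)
The plan is to prove (i) by a direct computation that reduces the right-hand side to the left-hand side using two facts about $s \defeq \eta(\fs)$: that it is symmetric and unitary, and that $U$ commutes with it. First I would record the consequence of $\eta$ being symmetric and unitary: writing $s^\dagger$ for the conjugate transpose, unitarity gives $\ol{s}^\top = s^{-1}$, while symmetry gives $s^\top = s$; transposing the first identity and substituting the second yields the key relation $\ol{s} = s^{-1}$. I would also note that, since $U$ commutes with $\eta(\fg)$ for all $\fg \in \SL$, it commutes with $s$ and hence with $s^{-1}$, and that $U$ is unitary.

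With these in hand, (i) is a bookkeeping computation. Starting from the right-hand side $\angleb{\eta(\fs)\ol\varphi(y),\, \ol\varphi(x)} = \angleb{s\,\ol{Uy},\, \ol{Ux}}$, I would apply the entrywise identity $\angleb{M\ol{a},\, \ol{b}} = \ol{\angleb{\ol{M} a,\, b}}$ (valid in either inner-product convention) to rewrite this as $\ol{\angleb{\ol{s}\, Uy,\, Ux}}$. Replacing $\ol{s}$ by $s^{-1}$, moving $U$ past $s^{-1}$ by commutativity, and invoking unitarity of $U$ collapses this to $\ol{\angleb{s^{-1}y,\, x}}$. Conjugate symmetry of the inner product together with the adjoint relation $(s^{-1})^\dagger = s$ then turns this into $\angleb{sx,\, y}$, which is exactly the left-hand side.

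Part (ii) should follow immediately from (i). Since $\rho$ is a subrepresentation on some $\eta$-invariant subspace $W$ and $\cS = \{w_1,\dots,w_m\}$ is an orthonormal eigenbasis for $\rho(\ft)$, the matrix of $\rho(\fs)$ in this basis is automatically diagonalizing for $\rho(\ft)$ and has $(i,j)$-entry $\angleb{\eta(\fs)w_j,\, w_i}$. Applying (i) with $x = w_j$ and $y = w_i$ gives $\angleb{\eta(\fs)w_j,\, w_i} = \angleb{\eta(\fs)\ol\varphi(w_i),\, \ol\varphi(w_j)}$, and the hypothesis that $\ol\varphi$ fixes $\cS$ pointwise, i.e.\ $\ol\varphi(w_k) = w_k$, reduces the right-hand side to $\angleb{\eta(\fs)w_i,\, w_j}$, which is the $(j,i)$-entry. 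Hence the matrix of $\rho(\fs)$ is symmetric, so $\cS$ is a symmetric basis for $\rho$.

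I anticipate the only genuine obstacle is in (i): keeping the conjugations, transposes, and adjoints straight, and in particular correctly deriving $\ol{s} = s^{-1}$ and confirming that the antilinearity of $\ol\varphi$ interacts with the inner product exactly as the identity $\angleb{M\ol{a},\, \ol{b}} = \ol{\angleb{\ol{M} a,\, b}}$ predicts. Once (i) is set up, (ii) is essentially formal.
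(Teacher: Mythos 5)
Your proof is correct and follows essentially the same route as the paper: both derive $\ol{\eta(\fs)}=\eta(\fs)^{-1}$ from symmetry plus unitarity, push the conjugations through the inner product, and use the commutativity and unitarity of $U$ to collapse the right-hand side of (i) to the left-hand side, with (ii) then following formally by evaluating matrix coefficients on the $\ol\varphi$-fixed orthonormal eigenbasis. The only difference is cosmetic: the paper first isolates the identity $\angleb{\eta(\fs)\ol{x},\ol{y}}=\angleb{\eta(\fs)y,x}$ and then applies the isometry $\varphi$, whereas you carry out the same manipulations inline.
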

\begin{proof}
Since $\eta(\fs)$ is symmetric, $\ol{\eta(\fs)} = \eta(\fs)^{-1}$, which implies $\angleb{\eta(\fs)\ol{x},\,\ol{y}} = \ol{\angleb{\eta(\fs)^{-1}x, y}} = \angleb{\eta(\fs)y,\,x}$ for any $x, y \in \BC^n$. As a result, we have
\begin{equation*}
\angleb{\eta(\fs)\ol{\varphi}(y),\,\ol{\varphi}(x)}  = \angleb{\eta(\fs)\varphi(x),\,\varphi(y)} =\angleb{\varphi(\eta(\fs)x),\,\varphi(y)} = \angleb{\eta(\fs)x,\,y}\,,
\end{equation*}
which proves (i).

By  (i), for any $x, y\in \cS$, the  matrix coefficients of $\rho(\fs)$ are given by
\[
    \rho(\fs)_{y,x} = \angleb{\eta(\fs)x, y} = \angleb{\eta(\fs)\ol{\varphi}(y), \ol{\varphi}(x)} = \angleb{\eta(\fs)y, x} = \rho(\fs)_{x,y}\,,
\]
which means $\rho(\fs)$ is symmetric with respect to $\cS$. Since $\cS$ is an eigenbasis for $\rho(\ft)$, $\rho$ is symmetric with respect to $\cS$.
\end{proof}

\subsection{Statement of the main results}
From the above examples, we can see that representations of $\SL$ can fail to be symmetrizable, and such representations cannot arise from any modular tensor category. However, our examples for this behavior are noncongruence representations, and hence are not very helpful in the study of MTCs: all $\SL$-representations coming from an MTC have to be congruence in the first place. Therefore, it is natural to ask if congruence representations can also fail to be symmetrizable. The main result of this paper is the following theorem.

\begin{thm}\label{thm:main}
Every finite-dimensional congruence representation of $\SL$ is symmetrizable.
\end{thm}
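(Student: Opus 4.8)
The plan is to reduce the statement, in two steps, to a concrete question about irreducible Weil representations at prime power level, and then to resolve that question using a $\BZ/2\BZ$-symmetry via Lemma \ref{lem:sym-1}. First I would reduce to the irreducible case: every finite-dimensional representation of $\SL$ is a direct sum of irreducibles, and symmetrizability is preserved under direct sums by Remark \ref{rem:preservation}(ii), so it suffices to prove the claim for irreducible congruence representations. Next I would reduce to prime power level. An irreducible congruence representation $\rho$ of level $n$ factors through $\qsl{n}$; factoring $n = \prod_i p_i^{\ld_i}$, the Chinese Remainder Theorem gives a group isomorphism $\qsl{n} \cong \prod_i \qsl{p_i^{\ld_i}}$, under which $\rho$ becomes an external tensor product $\rho \cong \bigotimes_i \rho_i$ with each $\rho_i$ an irreducible representation of $\qsl{p_i^{\ld_i}}$. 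Since symmetrizability is preserved under tensor products, again by Remark \ref{rem:preservation}(ii), it is enough to treat irreducible representations of prime power level.

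At this point I would invoke the Nobs--Wolfart classification: every irreducible representation of $\qsl{p^\ld}$ arises as a subrepresentation of a Weil representation $\eta$ attached to a finite quadratic module, and $\eta$ is symmetric with respect to the standard basis indexed by the group elements. To each such quadratic module I would attach an involutive automorphism, producing a unitary operator $U$ on the representation space that commutes with $\eta(\fg)$ for all $\fg \in \SL$. By Lemma \ref{lem:sym-1}, an irreducible subrepresentation $\rho$ of $\eta$ is then symmetric in any orthonormal eigenbasis of $\rho(\ft)$ that is fixed pointwise by the antilinear operator $\ol\varphi(x) = \ol{Ux}$. The remaining task is therefore to produce, for each irreducible constituent, an orthonormal eigenbasis of $\rho(\ft)$ invariant under $\ol\varphi$.

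The hard part will be precisely this last step. One must (i) write down the correct involution for every quadratic module appearing in the classification, (ii) check that the induced $U$ centralizes the whole Weil representation, and (iii), most delicately, verify that each irreducible summand admits an eigenbasis of $\rho(\ft)$ fixed by $\ol\varphi$. Step (iii) is where the substance lies: the eigenspaces of $\rho(\ft)$ must be compatible with the antilinear involution, which forces careful bookkeeping of how the automorphism permutes the standard basis vectors together with the phases recorded by the quadratic form. I expect this to split into cases according to the parity of $p$ and the finer structure of the module at level $p^\ld$, and to be the portion of the argument that genuinely requires the explicit Nobs--Wolfart data rather than soft representation-theoretic inputs.
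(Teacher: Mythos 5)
Your proposal follows essentially the same route as the paper's own proof: reduce to irreducibles via Remark \ref{rem:preservation}, reduce to prime-power level via the Chinese Remainder Theorem and the tensor factorization of irreducibles of a product group, and then handle irreducible representations of $\qsl{p^\ld}$ through the Nobs--Wolfart classification combined with an involutive automorphism of each quadratic module and Lemma \ref{lem:sym-1}. The case analysis you correctly identify as the remaining hard part is exactly what the paper carries out in Sections \ref{sec:symm} and \ref{sec:irreps} (Propositions \ref{prop:V-chi}, \ref{prop:V-chi-pm}, \ref{prop:spc}, and \ref{prop:unary}, plus the tensor-product case of the classification).
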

\begin{proof}
Let $\rho$ be a congruence representation of level $n$. Since $\rho$ factors through $\qsl{n}$, it decomposes into a direct sum of irreducible representations of $\qsl{n}$. If each of the irreducible components of $\rho$ is symmetrizable, then by Remark \ref{rem:preservation}, $\rho$ is also symmetrizable. Thus, we may assume without loss of generality that $\rho$ is irreducible.  Then, applying the Chinese remainder theorem and \cite[Thm.~3.2.10]{Ser77}, Theorem \ref{thm:main} follows from Proposition \ref{prop:main}.
\end{proof}

\begin{prop}\label{prop:main}
Let $p$ be a prime and $\ld$ be a positive integer. Every irreducible representation of $\qsl{p^\ld}$ is symmetrizable.
\end{prop}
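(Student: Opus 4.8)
The plan is to realize each irreducible representation of $\qsl{p^\ld}$ inside a symmetric Weil representation and then extract a symmetric basis by means of Lemma~\ref{lem:sym-1}. By the Nobs--Wolfart classification \cite{NW76}, every irreducible representation $\rho$ of $\qsl{p^\ld}$ is (possibly after twisting by a one-dimensional representation, which preserves symmetrizability by Remark~\ref{rem:preservation}) a subrepresentation of the Weil representation $\eta = \eta_{(M,Q)}$ attached to a finite quadratic module $(M,Q)$ whose underlying group is a $p$-group. In the standard basis $\{e_m \mid m\in M\}$ of $\BC[M]$, the operator $\eta(\ft)$ is diagonal with entries $\be(Q(m))$, while $\eta(\fs)$ equals, up to a global scalar, the matrix with entries $\be(-B(m,n))$, where $B$ denotes the bilinear form associated with $Q$. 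Since $B$ is symmetric, $\eta(\fs)$ is a symmetric matrix, so $\eta$ is a symmetric representation and can play the role of the ambient representation in Lemma~\ref{lem:sym-1}.

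Next I would produce the commuting symmetry from an involutive automorphism of the quadratic module. Any orthogonal automorphism $\s$ of $(M,Q)$ induces a permutation unitary $U_\s$ on $\BC[M]$ via $U_\s e_m = e_{\s(m)}$; because $\s$ preserves $Q$ and $B$, the operator $U_\s$ commutes with both $\eta(\ft)$ and $\eta(\fs)$, hence with all of $\eta(\SL)$. Taking $U = U_\s$ in Lemma~\ref{lem:sym-1}, the associated antilinear operator is $\ol\varphi(x) = \ol{U_\s x} = U_\s\ol{x}$, which is an antiunitary involution whenever $\s^2 = \id$. A direct computation with the intertwining relation $\ol\varphi\,\eta(\ft) = \eta(\ft)^{-1}\ol\varphi$ shows that $\ol\varphi$ preserves each eigenspace of $\eta(\ft)$. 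Thus, once I know that $\ol\varphi$ also preserves the subspace $V_\rho$ of the irreducible $\rho$, it restricts to an antiunitary involution on each $\rho(\ft)$-eigenspace of $V_\rho$; since an antiunitary involution always admits an orthonormal basis of fixed vectors, assembling these over the eigenspaces yields an orthonormal eigenbasis $\cS$ of $\rho(\ft)$ fixed pointwise by $\ol\varphi$. Lemma~\ref{lem:sym-1}(ii) then shows $\cS$ is a symmetric basis, so $\rho$ is symmetrizable.

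The heart of the matter is to choose $\s$ so that $\ol\varphi$ preserves $V_\rho$. In the Nobs--Wolfart construction, $\rho$ is cut out from $\eta$ as an isotypic component $V_\chi$ for a character $\chi$ of the symmetry group $A = \O(M,Q)$ (or of its rotation subgroup), and complex conjugation sends $V_\chi$ to $V_{\ol\chi}$. Hence I would choose $\s \in A$ to be an involution acting on $A$ by inversion, so that $U_\s$ sends $V_{\ol\chi}$ back to $V_\chi$ and $\ol\varphi = U_\s\circ(\text{conjugation})$ preserves $V_\chi$. For the principal-series representations, arising from nondegenerate forms on cyclic $p$-groups with $\O(M,Q) = \{\pm 1\}$, inversion is trivial and one may take $\s = \id$ (so $\ol\varphi$ is plain conjugation) or $\s$ = negation. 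For the cuspidal representations, arising from anisotropic binary forms, $\O(M,Q)$ is dihedral with cyclic rotation subgroup, and the required involution is the ``Galois'' conjugation of the associated quadratic extension, which lies in $\O(M,Q)$, squares to the identity, and inverts the rotation subgroup.

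The main obstacle, and the genuine content of the proof, is to verify the structural claims about $(M,Q)$ and its orthogonal group uniformly across the Nobs--Wolfart list. One must confirm, for each family of quadratic modules occurring, that an involutive orthogonal automorphism inverting the relevant symmetry group exists and is compatible with the isotypic decomposition isolating $\rho$. I expect the cyclic (principal-series) cases to be routine, while the anisotropic binary (cuspidal) cases require care in identifying the orthogonal group and its conjugation involution, and the prime $p = 2$ together with small values of $\ld$ will need to be handled as exceptional cases where the quadratic modules and their automorphism groups behave irregularly. Checking in each instance that the resulting $\ol\varphi$ fixes an eigenbasis of $\rho(\ft)$ is where the bulk of the computational work will lie.
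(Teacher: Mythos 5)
Your strategy is essentially the paper's own: embed each irreducible in a symmetric Weil representation, build an antilinear operator $\ol\varphi$ from an involutive automorphism of $(M,Q)$ that inverts the symmetry group $\fA$ (the paper's $\k$), and conclude via Lemma \ref{lem:sym-1}; your observation that an antiunitary involution on each $\ft$-eigenspace automatically has an orthonormal basis of fixed vectors is correct and neatly replaces the explicit constructions in Proposition \ref{prop:kappa}. But two steps fail as stated. First, your version of the Nobs--Wolfart classification is too weak: by \cite[Hauptsatz 2]{NW76}, an irreducible representation of $\qsl{p^\ld}$ may be a tensor product of \emph{two} representations from the list (standard, special, unary), and these factors are in general higher-dimensional, so ``twisting by a one-dimensional representation'' does not account for them. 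The paper disposes of this case with the tensor-product half of Remark \ref{rem:preservation}; alternatively you would have to enlarge your framework to Weil representations of orthogonal direct sums $(M_1\oplus M_2,\,Q_1\oplus Q_2)$, which your two-case structural analysis of $\Aut(M,Q)$ (cyclic, or anisotropic binary) does not cover.

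Second, and more seriously, the mechanism you give for the key step --- that $\ol\varphi$ preserves $V_\rho$ because ``$\rho$ is cut out from $\eta$ as an isotypic component $V_\chi$'' --- is false exactly in the hard cases. The special representations $R^{\s}_{2^\ld}(r,t,\chi)_1$ of Table \ref{tbl:special} and the unary representations $(R_{p^\ld}(r)_\pm)_1$ with $\ld \ge 2$ are \emph{proper} irreducible subrepresentations of $V^\chi$ (resp.\ of the eigenspaces $V_\pm$): there $\chi$ is not primitive and the isotypic component is reducible. For these one needs a further argument, and this is precisely where the paper does its real work (Sections \ref{subsec:spc-rep} and \ref{sec:r_unary}: explicit $\ft$-eigenbases taken from \cite{NW76}, checked elementwise to be fixed by $\ol\varphi_\k$). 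Your approach could be repaired abstractly --- conjugation by $\ol\varphi$ implements the automorphism $\fs\mapsto\fs\inv$, $\ft\mapsto\ft\inv$, so it permutes the irreducible constituents of $V^\chi$ of level $p^\ld$, and \cite[Sec.~6]{NW76} asserts there is a unique such constituent, forcing $\ol\varphi(V_\rho)=V_\rho$ --- but that uniqueness argument (or some substitute) is absent from your proposal, which instead defers these cases to unspecified ``computational work.'' As written, the proof is incomplete exactly on the representations for which the isotypic-component argument does not apply.
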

The proof of Proposition \ref{prop:main} will be provided in Sections \ref{sec:symm} and \ref{sec:irreps}.

\section{Weil representations and symmetrizability}\label{sec:symm}

The irreducible representations of $\qsl{p^\ld}$ have been classified by Nobs and Wolfart \cite{NW76}, and all such representations can be built from subrepresentations of Weil representations (as detailed in Section \ref{sec:irreps}). In this section, we first define quadratic modules and Weil representations in general, then establish some criteria for the symmetrizability of subrepresentations thereof.

\subsection{Quadratic forms and Weil representations}\label{subsec:quadforms}
\begin{defn}\label{defn:weil}
Let $M$ be an additive abelian group.  A \emph{nondegenerate quadratic form} on $M$ is a function $Q : M \to \BQ/\BZ$ such that
\begin{enumerate}
    \item[(i)] $Q(-a) = Q(a)$ for all $a \in M$ and
    \item[(ii)] $B(a,b) \defeq Q(a+b) - Q(a) - Q(b)$ defines a nondegenerate bilinear map.
\end{enumerate}
The pair $(M,Q)$ is then called a (nondegenerate) \emph{quadratic module}. In this paper, all quadratic modules are assumed to be nondegenerate.
\end{defn}
Quadratic modules are closely related to \emph{pointed} modular categories, in which the isomorphism classes of simple objects form an abelian group under the tensor product (see, for example, \cite[Sec.~8]{ENO05}). Precisely: on the one hand, given any pointed modular category $\cC$, the group of isomorphism classes of simple objects, together with the function defined by their twists, forms a quadratic module; on the other hand, given a quadratic module $(M, Q)$, one can use the Eilenberg--MacLane theorem \cite{EM471, EM472} on abelian 3-cocycles to construct a unique (up to equivalence) pointed modular category $\cC(M, Q)$ \cite{JS85, JS93} (see also \cite[Thm.~8.4.9]{EGNO}).

More relevantly, each quadratic module $(M, Q)$ has an associated projective representation of $\SL$, which can be described as follows. The space of complex-valued functions on $M$, denoted by $V \defeq \BC^M$, is equipped with a natural Hermitian form 
\begin{equation*}
    \langle f, g \rangle \defeq \sum_{a \in M} f(a) \ol{g(a)}\,,
\end{equation*}
and we denote the vector norm of $f\in V$ by $\norm{f} \defeq \sqrt{\langle f, f \rangle}$. Note that $V$ admits a standard orthonormal basis: $\{\d_{a} \mid a \in M\}$. As described in \cite[Satz 2 \& Sec.~2]{Nobs1}, we have a projective representation
\[
    W(M, Q): \qsl{p^\ld} \to \PGL(V)
\]
defined by
\begin{equation}\label{eq:weil-rep}
\begin{split}
\fs\, \d_a \defeq W(M, Q)(\fs)(\d_a) &= \frac{\gamma_Q}{\abs{M}} \sum_{b \in M} \be(B(a,b))\,  \d_b\,, \\
\ft\,  \d_a \defeq W(M,Q)(\ft)(\d_a) &= \be(Q(a))\,  \d_a\,.
\end{split}
\end{equation}
Here $\gamma_Q \defeq \sum_{a \in M} \be(Q(a))$ is the Gauss sum of $(M,Q)$. This representation is called the \emph{Weil representation} associated to $(M, Q)$.  In fact, $W(M,Q)$ is precisely the projective representation $\tilde{\rho}_{\cC(M, Q)}$ arising from the pointed modular category $\cC(M, Q)$, as described in Example \ref{ex:mtc}; the modular data $(S,T)$ of $\cC(M, Q)$ is given by
\[
    S_{a,b} = \frac{1}{\sqrt{|M|}}\sum_{b \in M} \be(-B(a,b))\qquad \text{and}\qquad T_{a,b} = \be(Q(a)) \cdot \delta_{a,b} 
\]
for $a,b \in M$.  As noted in that example, $W(M,Q)$ can be rescaled to a linear representation of $\SL$, and the result is congruence and symmetric.

\subsection{Symmetrizability criteria}\label{subsec:criteria}

While it is immediate from \eqref{eq:weil-rep} that, for any quadratic module $(M,Q)$, the associated representation $W(M, Q)$ is symmetric, this does not necessarily imply that a given subrepresentation of $W(M, Q)$ is symmetrizable (as demonstrated in Example \ref{ex:nonsymmsub}).  To establish criteria for the symmetrizability of such subrepresentations, we use the following.

For any quadratic module $(M,Q)$, let $\Aut(M,Q)$ denote the group of automorphisms $\omega$ of the abelian group $M$ satisfying $Q(\omega a) = Q(a)$ for all $a \in M$.
For any $\omega \in \Aut(M,Q)$, we define the associated $\BC$-linear map $\varphi_{\omega}: V \to V$ by $\varphi_\omega(\delta_a) \defeq \delta_{\omega a}$ and the antilinear map $\ol{\varphi}_\omega$ as the composition of $\varphi_\omega$ and complex conjugation, relative to the standard basis $\{\delta_a \mid a \in M\}$ for $V=\BC^M$. Note that $\varphi_\omega$ preserves $\angleb{\cdot, \cdot}$, hence is an isometry on $V$ in the usual sense.

\begin{prop}\label{prop:kappa}
    Let $\omega \in \Aut(M,Q)$ be an involution and $\rho$ a subrepresentation of $W(M, Q)$ on $Y \subseteq V$. If $Y$ admits an orthonormal basis $\cB$ for which
    \begin{itemize}
    \item[(i)] $\cB$ is a set of eigenvectors of $\rho(\ft)$ and
    \item[(ii)] for any $f \in \cB$ such that $f$ and $\ol{\varphi}_\omega(f)$ are linearly independent, $\ol{\varphi}_\omega(f) \in \cB$,
    % \item[(iv)] \R{$\langle\fs\cdot F, G \rangle = \langle\fs\cdot \varphi_\kappa(G), \varphi_\kappa(F) \rangle$ for $F,G \in \cB$. }
    \end{itemize}
    then $\rho$ is symmetrizable.
\end{prop}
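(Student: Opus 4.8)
The strategy is to apply Lemma~\ref{lem:sym-1} to the symmetric representation $\eta = W(M,Q)$ with $U = \varphi_\omega$, after upgrading the given basis $\cB$ to one that is fixed \emph{pointwise} by $\ol\varphi_\omega$. First I would record the structural facts about $\varphi_\omega$. The representation $\eta = W(M,Q)$ is symmetric by \eqref{eq:weil-rep}, since $B$ is symmetric (so $\eta(\fs)$ is symmetric) and $\eta(\ft)$ is diagonal. Because $\omega \in \Aut(M,Q)$ preserves both $Q$ and the induced form $B$, a direct check on the generators $\fs,\ft$ shows that $\varphi_\omega$ is a unitary operator commuting with $\eta(\fs)$ and $\eta(\ft)$, hence with $\eta(\fg)$ for all $\fg \in \SL$; thus the hypotheses of Lemma~\ref{lem:sym-1} are met.

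Next I would isolate the relevant properties of $\ol\varphi_\omega$. Since $\omega^2 = \id$, the map $\ol\varphi_\omega$ is an antiunitary involution. Moreover, as $\eta(\ft)$ is diagonal and unitary, $\ol\varphi_\omega$ intertwines $\eta(\ft)$ with $\eta(\ft)^{-1}$; because the eigenvalues of $\rho(\ft)$ lie in $\U(1)$, this shows that $\ol\varphi_\omega$ in fact preserves each eigenspace $Y_\lambda$ of $\rho(\ft)$. Condition (ii) forces $\ol\varphi_\omega(f) \in Y$ for every $f \in \cB$: in the linearly dependent case $\ol\varphi_\omega(f)$ is a (unit) scalar multiple of $f$, and otherwise $\ol\varphi_\omega(f) \in \cB \subseteq Y$ by hypothesis. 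Combining these two facts, $\ol\varphi_\omega$ restricts to an antiunitary involution on each finite-dimensional eigenspace $Y_\lambda$, with $\cB_\lambda = \cB \cap Y_\lambda$ as an orthonormal basis.

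The heart of the argument is then to replace $\cB$ by an orthonormal eigenbasis $\cS$ of $\rho(\ft)$ that $\ol\varphi_\omega$ fixes pointwise, working one eigenspace at a time. For $f \in \cB_\lambda$, antiunitarity and condition (ii) leave two possibilities. If $\ol\varphi_\omega(f) = c f$ with $|c| = 1$, I replace $f$ by $\sqrt{c}\,f$, which lies in $Y_\lambda$ and satisfies $\ol\varphi_\omega(\sqrt{c}\,f) = \sqrt{c}\,f$. If instead $\ol\varphi_\omega$ swaps $f$ with a distinct orthonormal partner $f' \in \cB_\lambda$, I replace the pair $\{f,f'\}$ by $\tfrac{1}{\sqrt2}(f+f')$ and $\tfrac{i}{\sqrt2}(f-f')$, which are orthonormal, span the same plane, and are each fixed by the antilinear $\ol\varphi_\omega$. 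Since all replacements stay inside a single eigenspace, orthonormality with the remaining basis vectors is preserved automatically, and carrying them out across all $Y_\lambda$ produces the desired $\cS$. Lemma~\ref{lem:sym-1}(ii) then yields that $\cS$ is a symmetric basis for $\rho$, so $\rho$ is symmetrizable.

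I expect the main obstacle to be conceptual rather than computational: bridging the gap between the hypothesis available here---that $\cB$ is merely stable under $\ol\varphi_\omega$ up to scalars and pairwise permutation---and the stronger input demanded by Lemma~\ref{lem:sym-1}(ii), namely a basis fixed \emph{pointwise}. The realization that unlocks this is that $\ol\varphi_\omega$ is an antiunitary involution preserving each eigenspace, so the problem decouples across eigenspaces and reduces to choosing a real basis for a finite-dimensional real structure on each $Y_\lambda$; the square-root and pair-symmetrization steps are exactly the concrete instances of this choice. The remaining points---that $\sqrt{c}$ exists in $\U(1)$, that the symmetrized pair is orthonormal, and that $\ol\varphi_\omega$ maps $\lambda$-eigenvectors to $\lambda$-eigenvectors precisely because $\lambda = 1/\ol\lambda$ for $\lambda \in \U(1)$---are routine verifications.
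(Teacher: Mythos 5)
Your proposal is correct and takes essentially the same route as the paper: both arguments split $\cB$ into vectors fixed by $\ol{\varphi}_\omega$ up to a unit scalar and swapped pairs, repair the former by multiplying by a square root of that scalar, replace each swapped pair $\{f, f'\}$ by $\tfrac{1}{\sqrt{2}}(f+f')$ and $\tfrac{i}{\sqrt{2}}(f-f')$, and then invoke Lemma \ref{lem:sym-1}(ii) on the resulting pointwise-fixed eigenbasis. The only difference is organizational: you work eigenspace-by-eigenspace of $\rho(\ft)$, while the paper performs the construction globally on $\cB$ and verifies at the end (via $\rho(\ft)\ol{\varphi}_\omega(f) = \xi_f \ol{\varphi}_\omega(f)$) that the new basis is still an eigenbasis.
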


\begin{proof}
Let $\cB_1 \defeq \{f \in \cB \mid  f \text{ and } \ol{\varphi}_\omega(f) \text{ are linearly dependent}\}$. This means that, for each $f\in\cB_1$, there exists some $\eta_f \in \U(1)$ with $\ol{\varphi}_\omega(f) = \eta_f f$. Since $\omega^2 = \id$, $\ol{\varphi}_\omega^2 = \id$. So, we can choose $\cB_2\subset\cB\smallsetminus\cB_1$ such that $\cB_2\cap \ol{\varphi}_\omega(\cB_2) = \varnothing$ and $\cB = \cB_1 \sqcup \cB_2 \sqcup \ol{\varphi}_\omega(\cB_2)$. It is then clear that the set
\[
    \cS \defeq \{ \sqrt{\eta_f}f \mid f \in \cB_1 \} \sqcup \big\{ \frac{1}{\sqrt{2}}(f + \ol{\varphi}_{\omega}(f))~\big\vert~f \in \cB_2 \big\} \sqcup \big\{ \frac{i}{\sqrt{2}}(f - \ol{\varphi}_{\omega}(f))~\big\vert~f \in \cB_2 \big\}
\]
is an orthonormal basis for $Y$. Since $\ol{\sqrt{\e}} = \e \sqrt{\e}$ for $\e \in \{\pm 1\}$, we can also write $\cS$ as
\[
\cS = \{ \sqrt{\eta_f}f \mid f \in \cB_1 \} \sqcup \big\{ \frac{1}{\sqrt{2}}(\sqrt{\e}f + \ol{\sqrt{\e}}\ol{\varphi}_{\omega}(f))~\big\vert~\e\in\{\pm 1\}\,,\ f \in \cB_2 \big\}\,.
\]
It follows from the antilinearity of $\ol{\varphi}_\omega$ that $\ol{\varphi}_\omega(h) = h$ for all $h \in \cS$.

Finally, for each $f \in \cB$, we have $\rho(\ft)(f) = \xi_f f$ for some $\xi_f \in \U(1)$. Then
\[
    \rho(\ft)\ol{\varphi}_\omega(f) = \ol{\varphi}_\omega \rho(\ft)^{-1}(f) = \ol{\varphi}_\omega(\xi_f^{-1}f) = \xi_f\ol{\varphi}_\omega(f)\,.
\]
Therefore, $\cS$ is an eigenbasis for $\rho(\ft)$. By Lemma \ref{lem:sym-1}, $\cS$ is a symmetric basis for $\rho$, which means that $\rho$ is symmetrizable.
\end{proof}

\section{Irreducible representations of \texorpdfstring{$\qsl{p^\ld}$}{}} \label{sec:irreps}

In this section, we describe all of the irreducible representations of $\qsl{p^\ld}$ as per \cite{NW76}, where they are constructed using specific quadratic modules and their Weil representations. We show that all of these irreducible representations admit symmetries that enable us to apply the symmetrizability criteria established in Section \ref{sec:symm}. Finally, we complete the proof of Proposition \ref{prop:main} near the end of this section.

\subsection{Weil representations of prime power level}\label{subsec:nwreps}
Let $p$ be a prime and $\ld$ a positive integer. We follow \cite{Nobs1, NW76} and denote the ring $\BZ/p^{\ld}\BZ$ by $A_\ld$. By abuse of notation, we use $\fs$ and $\ft$ to denote both the generators of $\SL$ and their images in $\qsl{p^\ld}$. Clearly, any representation of $\qsl{p^\ld}$ is determined by the images of $\fs$ and $\ft$.

To construct irreducible representations of $\qsl{p^\ld}$, we consider the types of quadratic modules $(M,Q)$ described in Table \ref{tbl:weil_types}, wherein $M$ is an $A_\ld$-module (see~\cite[Def.~3]{Nobs1}).

\begin{table}[ht]
$\def\arraystretch{1.1}
\begin{array}{|l|c|c|c|c|c|c|}  
\hline
\text{Type}& p^\ld &  M & Q & \multicolumn{1}{c|}{\text{Other parameters}} & \fA \\
\hline\hline
\multirow{2}{*}{$D_{p^\ld}$} & \multirow{2}{*}{$\ld \ge 1$}  & \multirow{2}{*}{$A_\ld \oplus A_\ld$} & \multirow{2}{*}{$\dfrac{xy}{p^{\ld}}$} & 
&
\multirow{2}{*}{$A_{\ld}^{\times}$}\\
&&&&& \\
\hline
\multirow{4}{*}{$N_{p^\ld}$} & p=2 & \multirow{2}{*}{$A_\ld \oplus A_\ld$} & \multirow{2}{*}{$\dfrac{x^2 + xy + y^2}{2^{\ld}}$} &
& 
\multirow{8}{*}{$\{\e \in M^\times \mid \e \ol{\e} = 1\}$}
\\
&\ld \ge 1 &&&& \\
\cline{2-5}
& p \text{ odd} & \multirow{2}{*}{$A_\ld \oplus A_\ld$} & \multirow{2}{*}{$\dfrac{x^2 + xy + \frac{1+t}{4}y^2}{p^{\ld}}$} & t \in \BN, \,\big(\frac{-t}{p}\big) = -1
& 
%\multirow{2}{*}{$\{\e \in M^\times \mid \e \ol{\e} = 1\}$}
\\
&\ld \ge 1 &&& t \equiv 3 \mod 4 & \\
\cline{1-5}
\multirow{4}{*}{$R_{p^\ld}^\s(r,t)$} & p=2 & \multirow{2}{*}{$A_{\ld-1} \oplus A_{\ld-\s-1}$} & \multirow{2}{*}{$\dfrac{r(x^2 + 2^\s t y^2)}{2^{\ld}}$} & 0 \leq \s \leq \ld-2 &
\\
& \ld \geq 2 &&& r,t \in \BN \text{ and odd} & \\
\cline{2-5}
& p \text{ odd} & \multirow{2}{*}{$A_{\ld} \oplus A_{\ld-\s}$} & \multirow{2}{*}{$\dfrac{r(x^2 + p^\s t y^2)}{p^{\ld}}$} & 1 \leq \s \leq \ld-1 & \\
& \ld \geq 2 &&& r,t \in \{1,u\}& \\
\hline
\multirow{2}{*}{$R_{p^\ld}(r)$} & p \text{ odd} & \multirow{2}{*}{$A_{\ld}$} & \multirow{2}{*}{$\dfrac{rx^2}{p^{\ld}}$} & \multirow{2}{*}{$r \in \{1,u\}$}&
\\
& \ld \ge 1 &&&&\\
\hline
\end{array}$
\caption{Types of quadratic modules with at most two elementary divisors.}
\label{tbl:weil_types}
\end{table}
Here $u$ is a fixed quadratic nonresidue mod $p$.  The group $\fA$ will be explained in Section \ref{subsec:std-rep}.

Each choice of $M$ has a ring structure. Types $D_{p^\ld}$ and $R_{p^\ld}(r)$ are equipped with their natural ring structure. For the others, we may identify $M$ with a quotient ring as follows:
\begin{itemize}
    \item for type $N_{2^\ld}$, let $X \defeq \frac{1}{2}(1 + \sqrt{-3})$, and then $M \defeq A_\ld \oplus A_\ld \cong \BZ[X] / (2^\ld)$\,,
    \item for type $N_{p^\ld}$ with $p$ odd, let $X \defeq \frac{1}{2}(1 + \sqrt{-t})$, and then $M \defeq A_\ld \oplus A_\ld \cong \BZ[X] / (p^\ld)$\,,
    \item for type $R_{2^\ld}^\s(r,t)$, let $X \defeq \sqrt{-2^\s t}$, and then $M \defeq A_{\ld-1} \oplus A_{\ld-\s-1} \cong \BZ[X] / (2^{\ld-\s-1}X)$\,,
    \item for type $R_{p^\ld}^\s(r,t)$ with $p$ odd, let $X \defeq \sqrt{-p^\s t}$, and then $M \defeq A_{\ld} \oplus A_{\ld-\s} \cong \BZ[X] / (p^{\ld-\s}X)$\,.
\end{itemize}
In each case, we identify $(x,y)$ with $x + Xy$.  The $A_\ld$-module $M$ then inherits the multiplication and complex conjugation of the quotient ring as well as the norm of $\BZ[X]$.  In particular, for $N_{p^\ld}$, $Q(x,y) = \operatorname{Norm}(x,y)/p^\ld$; while for $R_{p^\ld}^\s(r,t)$, we have $Q(x,y) = r \cdot \operatorname{Norm}(x,y)/p^\ld$.  We write $M^\times$ for the multiplicative group of units of $M$.

For each of these types, the projective Weil representation $W(M,Q)$ defined by \eqref{eq:weil-rep} is in fact a linear representation of $\qsl{p^\ld}$ \cite[Sec.~2]{Nobs1}.

\subsection{Standard irreducible representations}
\label{subsec:std-rep}
The quadratic modules of type $D_{p^\ld}$, $N_{p^\ld}$, and $R_{p^\ld}^\s(r,t)$, as described in Table \ref{tbl:weil_types}, will simply be referred as \emph{binary quadratic modules} throughout this paper, as $M$ has exactly 2 elementary divisors. 

For any binary quadratic module $(M, Q)$, we define $\k \in \Aut(M,Q)$ as follows: 
\[
    \k \defeq 
    \begin{cases}
    (x, y) \mapsto (y, x)\,, & \text{if } (M,Q) \text{ is of type } D_{p^\ld}\,;\\
    (x, y) \mapsto \overline{(x, y)} = (x+y,-y)\,, & \text{if } (M,Q) \text{ is of type } N_{p^\ld}\,;\\
    (x, y) \mapsto \overline{(x, y)} = (x,-y)\,, & \text{if } (M,Q) \text{ is of type } R_{p^\ld}^\s(r,t)\,.
    \end{cases}
\]
From the definition of $Q$ in Table \ref{tbl:weil_types}, it is immediate that $\k \in \Aut(M, Q)$.  Note that $\k$ is of order 2, except in the case of $R_{2^\ld}^{\ld-2}(r,t)$, where the second factor of $M$ is isomorphic to $\BZ/2\BZ$ and hence $\k$ is trivial.
\begin{defn}
    A binary quadratic module of type $R_{2^\ld}^{\ld-2}(r,t)$ is called \emph{extremal}.
\end{defn}

Next, we associate to each binary quadratic module $(M,Q)$ a particular abelian subgroup $\fA \leq \Aut(M,Q)$ as follows. If $(M,Q)$ is of type $D_{p^\ld}$, the group $\fA \defeq A_{\ld}^\times$ acts on $M$ via $\e(x,y) = (\e\inv x, \e y)$ for any $\e\in\fA$ and $(x,y) \in M$; if $(M,Q)$ is of type $N_{p^\ld}$ or $R_{p^\ld}^\s(r,t)$, we take $\fA \defeq \{\e \in M^\times \mid \e \ol{\e} = 1\}$, acting on $M$ by multiplication (see Section \ref{subsec:nwreps}). In each case, we can check that $\fA$ is indeed an abelian subgroup of $\Aut(M,Q)$. Note that, in the case of an extremal quadratic module $(M, Q)$, we have $\ol{a} = a$ for all $a \in M$, so $\fA = \{\e \in M^\times \mid \e^2 = 1\}$ is an elementary 2-group. We also have the following lemma.

\begin{lemma}\label{lem:k-a} 
Let $(M,Q)$ be a binary quadratic module. For any $\e \in \fA$, $(\k \circ \e)^2  = \id$.
\end{lemma}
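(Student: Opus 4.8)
The plan is to verify the identity directly, case by case, following the trichotomy defining $\k$ together with the matching description of the $\fA$-action. In each case one simply applies $\k\circ\e$ twice to an arbitrary element of $M$ and checks that one recovers the original element; no structural input beyond the explicit formulas is needed.

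First I would treat type $D_{p^\ld}$, where $\k(x,y) = (y,x)$ and $\e$ acts by the twisted rule $\e\cdot(x,y) = (\e\inv x, \e y)$. Applying $\k\circ\e$ once gives $(\k\circ\e)(x,y) = (\e y, \e\inv x)$, and applying it a second time yields
\[
    (\k\circ\e)^2(x,y) = \k\bigl(\e\inv(\e y),\, \e(\e\inv x)\bigr) = \k(y,x) = (x,y)\,,
\]
so $(\k\circ\e)^2 = \id$.

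For types $N_{p^\ld}$ and $R_{p^\ld}^\s(r,t)$, I would work in the ring structure on $M$, where $\k$ is conjugation $a \mapsto \ol a$ and $\fA = \{\e\in M^\times \mid \e\ol\e = 1\}$ acts by multiplication. Since conjugation is a ring automorphism, $\ol{\e a} = \ol\e\,\ol a$, and hence for any $a \in M$,
\[
    (\k\circ\e)^2(a) = \ol{\e\cdot\ol{\e a}} = \ol{\e\,\ol\e\,\ol a} = \ol{\ol a} = a\,,
\]
using $\e\ol\e = 1$ and $\ol{\ol a} = a$. This also covers the extremal type $R_{2^\ld}^{\ld-2}(r,t)$: there $\k$ is trivial, so $(\k\circ\e)^2 = \e^2$, which equals $\id$ because $\fA$ is an elementary $2$-group in that case.

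The argument is entirely routine; the only point demanding care is to keep the two distinct $\fA$-actions straight\,---\,the twisted action $(\e\inv x, \e y)$ in type $D$ versus honest ring multiplication in types $N$ and $R$\,---\,and to use that conjugation is multiplicative so that $\ol{\e a} = \ol\e\,\ol a$. I do not anticipate a genuine obstacle.
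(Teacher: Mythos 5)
Your proof is correct and follows essentially the same route as the paper's: a direct case-by-case computation, using the twisted action $(\e\inv x, \e y)$ for type $D_{p^\ld}$ and the identities $\ol{\e a} = \ol\e\,\ol a$, $\e\ol\e = 1$ for types $N_{p^\ld}$ and $R_{p^\ld}^\s(r,t)$, with the extremal case handled via $\k = \id$ and $\fA$ having exponent $2$. No meaningful difference from the paper's argument.
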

\begin{proof}
Indeed, for type $D_{p^\ld}$, we have
\[(\k \circ \e)^2  (x,y) = \k \big( \e (\e y, \e^{-1} x)\big) = (x,y)\]
for all $(x,y) \in M$. For type $N_{p^\ld}$ or $R_{p^\ld}^\s(r,t)$, we have $\ol{\e}= \e^{-1}$ and thus \[(\k \circ \e)^2(a) = \k \big( \e (\ol{\e}\,\ol{ a })\big) = a\]
for all $a \in M$. As a particular case, for the extremal type $R_{2^\ld}^{\ld-2}(r,t)$, $\fA$ has exponent $2$ and $\k = \id$, so the condition $(\k \circ \e)^2 = \id$ follows trivially.
\end{proof}

Characters of $\fA$ naturally give rise to subrepresentations of $W(M,Q)$. More precisely, denote by $\hat{\fA}$ the character group of $\fA$. Then, for any $\chi \in \hat{\fA}$,
\begin{equation}\label{eq:V-chi}
    V^{\chi} \defeq \{ f \in \BC^M \mid f(\e a) = \chi(\e)f(a) \text{ for all } a \in M \text{ and } \e \in \fA \}
\end{equation}
is an $\qsl{p^\ld}$-invariant subspace of $V$. The restriction of $W(M,Q)$ to $V^\chi$ is denoted by $W(M,Q,\chi)$. Using \eqref{eq:V-chi} and Lemma \ref{lem:k-a}, it is straightforward to verify that $\varphi_\k$ (as defined in Section \ref{subsec:criteria}) maps $V^\chi$ to $V^{\ol{\chi}}$. In fact, $W(M, Q, \chi)$ is equivalent to $W(M, Q, \ol{\chi})$ via $\varphi_\k$.

A basis for $V^\chi$ can be chosen as follows (cf.~\cite{NW76}). For any $\chi \in \hat{\fA}$ and $a \in M$, define
\begin{equation*}
    \tilde{f}_{a}^\chi \defeq 
    \sum_{\e\in\fA} \chi(\e) \d_{\e a}\,.
\end{equation*}
Clearly, we have $\tilde{f}^\chi_a \in V^\chi$. Whenever $\tilde{f}_{a}^{\chi}\ne 0$ (which occurs if and only if $\Stab(a)\subseteq \ker(\chi)$), define
\begin{equation*}
    f_a^\chi \defeq \frac{\tilde{f}_{a}^\chi}{\norm{\tilde{f}_{a}^\chi}}~.
\end{equation*}
Let $\theta$ be a complete set of representatives for the orbits of $\fA$ on $M$ such that, for any $a \in \theta$, if $\k a \notin \fA a$, then $\k a\in \theta$. Define 
\[
    \theta^\chi \defeq \theta \cap \{a \in M \mid \Stab(a) \subseteq \ker(\chi)\}\,.
\] 
By Lemma \ref{lem:k-a}, $\Stab(\k a) = \Stab(a)$ for any $a \in M$, so the assumption on $\theta$ ensures that, if $a \in \theta^\chi$ and $\k a \notin \fA a$, then $\k a \in \theta^\chi$. Moreover, since the $\fA$-orbits are disjoint, the set
\begin{equation*}
\label{eq:b-chi}
    \cB^\chi \defeq \left\{
    f_{a}^\chi~\middle\vert~a\in \theta^\chi\right\}
\end{equation*}
is an orthonormal basis for $V^\chi$.

\begin{prop}\label{prop:V-chi}
Let $(M, Q)$ be a binary quadratic module. Then, for any character $\chi \in \hat{\fA}$, $W(M,Q,\chi)$ is symmetrizable.
\end{prop}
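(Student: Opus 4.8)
The plan is to apply Proposition~\ref{prop:kappa} directly, taking the involution $\omega$ to be the automorphism $\k$ attached to the binary quadratic module $(M,Q)$ in Section~\ref{subsec:std-rep}, and taking the orthonormal basis to be $\cB^\chi = \{f_a^\chi \mid a \in \theta^\chi\}$ constructed above. Since $\k$ has order at most $2$ (it is the identity exactly in the extremal case), it is an involution in $\Aut(M,Q)$, so the hypothesis on $\omega$ is met; it then suffices to verify conditions (i) and (ii) of that proposition for $\cB^\chi$.

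Condition (i) is the easy part. Because each $\e \in \fA \subseteq \Aut(M,Q)$ preserves $Q$, and $\ft$ acts on the standard basis by $\ft\,\d_a = \be(Q(a))\d_a$, I would compute
\[
    \ft\,\tilde{f}_a^\chi = \sum_{\e \in \fA} \chi(\e)\,\be(Q(\e a))\,\d_{\e a} = \be(Q(a))\,\tilde{f}_a^\chi\,,
\]
so that every $f_a^\chi$ is an eigenvector of $\rho(\ft)$ with eigenvalue $\be(Q(a))$. This gives (i).

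The heart of the argument is (ii), and the key computation is to identify the action of $\ol{\varphi}_\k$ on the basis. Using Lemma~\ref{lem:k-a} in the form $\k\e = \e^{-1}\k$ (valid for all $\e \in \fA$), together with the fact that $\chi$ is unitary, I would show
\[
    \ol{\varphi}_\k(\tilde{f}_a^\chi) = \sum_{\e \in \fA} \ol{\chi(\e)}\,\d_{\k\e a} = \sum_{\e \in \fA} \ol{\chi(\e)}\,\d_{\e^{-1}\k a} = \sum_{\e \in \fA} \chi(\e)\,\d_{\e \k a} = \tilde{f}_{\k a}^\chi\,,
\]
after reindexing $\e \mapsto \e^{-1}$; since $\ol{\varphi}_\k$ is norm-preserving, this gives $\ol{\varphi}_\k(f_a^\chi) = f_{\k a}^\chi$. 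In particular $\ol{\varphi}_\k$ maps $V^\chi$ back to itself (concretely, $\varphi_\k$ sends $V^\chi$ to $V^{\ol\chi}$ and conjugation sends it back), which is exactly what makes $\k$\,---\,rather than, say, conjugation alone\,---\,the right involution to feed into Proposition~\ref{prop:kappa}.

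It then remains to split into cases according to whether $\k a$ lies in the orbit $\fA a$. If $\k a \in \fA a$, writing $\k a = \e_0 a$ yields $\tilde{f}_{\k a}^\chi = \ol{\chi(\e_0)}\,\tilde{f}_a^\chi$, so $f_a^\chi$ and $\ol{\varphi}_\k(f_a^\chi)$ are linearly dependent and (ii) imposes no requirement. If $\k a \notin \fA a$, then $f_a^\chi$ and $f_{\k a}^\chi$ are supported on disjoint $\fA$-orbits, hence linearly independent; here I would invoke the defining property of $\theta$, namely that $a \in \theta$ with $\k a \notin \fA a$ forces $\k a \in \theta$, together with $\Stab(\k a) = \Stab(a) \subseteq \ker\chi$ (so that $\k a \in \theta^\chi$), to conclude $f_{\k a}^\chi \in \cB^\chi$. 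This establishes (ii), and Proposition~\ref{prop:kappa} yields symmetrizability. I expect the only genuine obstacle to be the bookkeeping in this case analysis\,---\,in particular, confirming that the orbit-representative set $\theta$ was chosen precisely so as to be stable under $\k$, which is exactly the compatibility needed for $\cB^\chi$ to be closed (up to scalars) under $\ol{\varphi}_\k$; the extremal case $\k = \id$ is then absorbed automatically, since there $\k a = a \in \fA a$ always and $\chi$ is real, placing us in the first (vacuous) case.
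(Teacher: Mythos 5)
Your proposal is correct and follows essentially the same route as the paper's own proof: applying Proposition~\ref{prop:kappa} with $\omega = \k$ to the basis $\cB^\chi$, verifying the $\ft$-eigenvector property, computing $\ol{\varphi}_\k(f_a^\chi) = f_{\k a}^\chi$ via $\k\e = \e^{-1}\k$ (Lemma~\ref{lem:k-a}), and splitting into the cases $\k a \in \fA a$ versus $\k a \notin \fA a$ using the $\k$-stability of $\theta$ and $\Stab(\k a) = \Stab(a)$. Your closing observation that the extremal case ($\k = \id$, $\chi$ real) is absorbed automatically is also consistent with the paper's treatment.
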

\begin{proof}
It suffices to show that the basis $\cB^\chi$ defined above satisfies the conditions in Proposition \ref{prop:kappa}.

Recall that for, any $a \in \theta^\chi$ and $\e\in\fA$, we have $Q(\e a) = Q(a)$. As such, \eqref{eq:weil-rep} yields
\begin{equation}\label{eq:tfachi}
\ft f^\chi_a = \frac{1}{\norm{\tilde{f}_{a}^{\chi}}}\sum_{\e\in\fA} \chi(\e) \ft\d_{\e a} = \frac{1}{\norm{\tilde{f}_{a}^{\chi}}}\sum_{\e\in\fA} \chi(\e) \be(Q(\e a))\d_{\e a} = \be(Q(a)) f_a^\chi\,.
\end{equation}
Thus, $\cB^\chi$ is an eigenbasis for $\ft$.

Further, by definition and Lemma \ref{lem:k-a}, for any $a \in \theta^\chi$, we have
\begin{equation}\label{eq:phi-k-f}
\ol{\varphi}_\k(f^\chi_a) = \frac{1}{\norm{\tilde{f}_{a}^{\chi}}}\sum_{\e\in\fA} \chi(\e^{-1}) \d_{\k\e a} = \frac{1}{\norm{\tilde{f}_{a}^{\chi}}}\sum_{\e\in\fA}\chi(\e^{-1}) \d_{\e^{-1}\k a} = f^\chi_{\k a}\,,
\end{equation}
noting that $\norm{\tilde{f}^\chi_a} = \norm{\tilde{f}^\chi_{\k a}}$. If $\k a \in \fA a$, then $\k a = \mu_a a$ for some $\mu_a \in \fA$.  This implies $f^{\chi}_{\k a} = f^{\chi}_{\mu_{a} a} = \chi(\mu_a^{-1})f^{\chi}_{a}$, and hence 
$f_a^\chi$ and $\ol\varphi_\k(f_a^\chi)$ are linearly dependent. Thus, if $f_a^\chi$ and $\ol\varphi_\k(f_a^\chi)$ are linearly independent, then  $\k a \notin \fA a$. By the assumption on $\theta$ and the preceding discussion, $\k a \in \theta^\chi$, and so $f^\chi_{\k a} = \ol\varphi_\k(f^\chi_a) \in \cB^\chi$. The result now follows from Proposition \ref{prop:kappa}.
\end{proof}

\begin{remark}\label{rmk:S-V-chi}
Let $\theta_1^\chi \defeq \{a \in \theta^\chi\mid \k a \in \fA a\}$. Then, by the proof of Proposition \ref{prop:kappa}, there is a choice of subset $\theta_2^\chi\subset\theta^\chi$ such that $\theta_2^\chi \cap \k(\theta_2^\chi) = \varnothing$ and $\theta^\chi = \theta^\chi_1 \cup \theta_2^\chi \cup \k(\theta_2^\chi)$. Moreover, a symmetric basis of $W(M, Q, \chi)$ can be chosen to be 
\[
    \cS^\chi = 
    \big\{ \sqrt{\chi(\mu_a^{-1})} f_a^\chi ~\big\vert~ a \in \theta_1^\chi \big\} \cup 
    \big\{ \frac{1}{\sqrt{2}}(f_a^\chi + f_{\k a}^\chi) ~\big\vert~ a \in \theta_2^\chi \big\} \cup
    \big\{ \frac{i}{\sqrt{2}}(f_a^\chi - f_{\k a}^\chi) ~\big\vert~ a \in \theta_2^\chi \big\}\,,
\]
where the notation $\mu_a$ is as in the proof of Proposition \ref{prop:V-chi}.
\end{remark}

When $\chi^2 = 1$ (i.e.~$\chi = \ol\chi$), $\varphi_\k$ becomes an auto-equivalence of $V^\chi$. Therefore, in this case, if $\varphi_\k\restr{V^{\chi}}\ne\id$, then $W(M, Q, \chi)$ admits a further decomposition into eigenspaces of $\varphi_\k$:
\[
    V^\chi_\pm \defeq \{ f \in V^\chi \mid f(\k a) = \pm f(a) \text{ for all } a \in M \}\,.
\]
The corresponding subrepresentations are denoted by $W(M, Q, \chi)_{\pm}$.

\begin{prop}\label{prop:V-chi-pm}
Let $(M, Q)$ be a binary quadratic module. Then, for any $\chi \in \hat{\fA}$ satisfying $\chi^2 = 1$ and $\varphi_\k\restr{V^{\chi}} \ne \id$, the subrepresentations $W(M, Q, \chi)_{\pm}$ are both symmetrizable.
\end{prop}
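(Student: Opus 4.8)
The plan is to re-use the involution $\k$ together with the explicit symmetric basis $\cS^\chi$ already furnished by Remark \ref{rmk:S-V-chi}, rather than hunting for a brand-new involution. The observation that drives everything is that the hypothesis $\chi^2 = 1$ forces the vectors $f_a^\chi$ to be \emph{real}: since $\chi$ then takes values in $\{\pm 1\}$, every coefficient of $\tilde{f}_a^\chi = \sum_{\e\in\fA}\chi(\e)\d_{\e a}$ is real, so $\ol{f_a^\chi} = f_a^\chi$. In particular $\varphi_\k(f_a^\chi) = \ol{\varphi}_\k(f_a^\chi) = f_{\k a}^\chi$ by \eqref{eq:phi-k-f}, and a short check shows that each of the three families making up $\cS^\chi$, namely $\sqrt{\chi(\mu_a^{-1})}f_a^\chi$, $\tfrac{1}{\sqrt2}(f_a^\chi + f_{\k a}^\chi)$, and $\tfrac{i}{\sqrt2}(f_a^\chi - f_{\k a}^\chi)$, is an eigenvector of $\varphi_\k$, with respective eigenvalues $\chi(\mu_a)\in\{\pm1\}$, $+1$, and $-1$.

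Granting this, I would argue as follows. Every element of $\cS^\chi$ lies in either $V^\chi_+$ or $V^\chi_-$, so $\cS^\chi$ splits as a disjoint union $\cS^\chi = \cS^\chi_+ \sqcup \cS^\chi_-$ with $\cS^\chi_\pm \defeq \cS^\chi \cap V^\chi_\pm$. Because $V^\chi = V^\chi_+\oplus V^\chi_-$ is an orthogonal direct sum, a dimension count shows each $\cS^\chi_\pm$ is an orthonormal basis of $V^\chi_\pm$. Now order $\cS^\chi$ so that the vectors of $\cS^\chi_+$ precede those of $\cS^\chi_-$. Since $V^\chi_\pm$ are $\qsl{p^\ld}$-subrepresentations, $W(M,Q,\chi)(\fs)$ preserves each of them, so its matrix in this ordered basis is block-diagonal along the splitting; as that matrix is symmetric and $W(M,Q,\chi)(\ft)$ is diagonal, each diagonal block is itself symmetric. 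Hence $\cS^\chi_\pm$ is a symmetric basis for $W(M,Q,\chi)_\pm$, giving symmetrizability of both.

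Equivalently, one can apply Proposition \ref{prop:kappa} directly with $\omega = \k$ to the subrepresentation on $Y = V^\chi_\pm$: the real eigenbasis of $\rho(\ft)$ built from the combinations $\tfrac1{\sqrt2}(f_a^\chi \pm f_{\k a}^\chi)$ (together with the appropriate $f_a^\chi$) satisfies $\ol{\varphi}_\k(f) = \pm f$ on $V^\chi_\pm$, so each basis vector is linearly dependent with its image under $\ol{\varphi}_\k$ and condition (ii) of that proposition holds vacuously. Here the hypothesis $\varphi_\k\restr{V^\chi}\ne\id$ is what guarantees $\k$ is a genuine order-two element, i.e.\ that $(M,Q)$ is not extremal, so that $\k$ is a legitimate involution for Proposition \ref{prop:kappa}.

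I expect the only real obstacle to be conceptual rather than computational. Having already used $\varphi_\k$ to cut $V^\chi$ into $V^\chi_\pm$, it is tempting to believe the $\k$-symmetry has been "used up'' and that a fresh involution, such as $\k\circ\e_0$ with $\e_0\in\fA$, is needed to symmetrize the pieces; in fact $\fA$ acts by scalars on $V^\chi$, so $\k\circ\e_0$ offers nothing new. The point that unlocks the proof is precisely the reality of the $f_a^\chi$ when $\chi^2 = 1$: it makes $\ol{\varphi}_\k$ act as $\pm$(complex conjugation) on $V^\chi_\pm$ and fix the natural eigenbasis, so $\k$ itself continues to work. Everything else—orthonormality, the block-diagonal form of $\rho(\fs)$, and the eigenvalue bookkeeping under $\varphi_\k$—is routine.
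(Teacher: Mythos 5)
Your proof is correct and follows essentially the same route as the paper: both hinge on the observation that $\chi^2 = 1$ makes the coefficients of $f_a^\chi$ real, so that $\varphi_\k(f_a^\chi) = f_{\k a}^\chi$, whence every element of the symmetric basis $\cS^\chi$ from Remark \ref{rmk:S-V-chi} is a $\varphi_\k$-eigenvector and $\cS^\chi_\pm \defeq \cS^\chi \cap V^\chi_\pm$ are symmetric bases for $W(M,Q,\chi)_\pm$. Your block-diagonal justification of that last step, and the alternative appeal to Proposition \ref{prop:kappa}, simply make explicit what the paper leaves implicit.
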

\begin{proof}
It suffices to show that every element in the symmetric basis $\cS^\chi$ for $V^\chi$ in Remark \ref{rmk:S-V-chi} is an eigenvector of $\varphi_\k$, since this will imply that $\cS^\chi_{\pm} \defeq V^\chi_{\pm} \cap \cS^\chi$ are symmetric bases for $W(M, Q, \chi)_{\pm}$.

By \eqref{eq:phi-k-f}, for any $a \in \theta^\chi$, we have $\varphi_\k(f_a^\chi) = \ol{f^{\chi}_{\k a}}$. Moreover, since $\chi^2 = 1$, we have $\ol{f^{\chi}_{\k a}} = f^\chi_{\k a}$, which means $\varphi_\k(f^\chi_a) = f^\chi_{\k a}$. Therefore, for any $a \in \theta_2^\chi$, it is readily seen that $\frac{1}{\sqrt{2}}(f_a^\chi + f_{\k a}^\chi) \in V_+^\chi$, and $\frac{i}{\sqrt{2}}(f_a^\chi - f_{\k a}^\chi) \in V_-^\chi$.
 
Finally, for any $a \in \theta^\chi_1$, $\k a = \mu_a a$ for some $\mu_a \in \fA$. In this case, the same computation as in the proof of Proposition \ref{prop:V-chi} shows that $\varphi_\k(f_a^\chi) = f^\chi_{\k a} = \chi(\mu_a^{-1}) f^{\chi}_{a}$, which equals $\pm f^{\chi}_{a}$ as $\chi^2 = 1$. This completes the proof.
\end{proof}

The question of which characters $\chi \in \hat{\fA}$ give rise to irreducible $W(M, Q, \chi)$ was answered as a remarkable result of \cite{NW76}; we need the following definition for the statement.

\begin{defn}
    Let $(M,Q)$ be a binary quadratic module which is not extremal, and let $\fA \leq \Aut(M,Q)$ be the corresponding abelian subgroup. A character $\chi \in \hat{\fA}$ is called \emph{primitive} if there exists some $\e \in \fA$ such that $\chi(\e) \neq 1$ and $\e$ fixes $pM$ pointwise.
\end{defn}

Nobs and Wolfart showed that most primitive characters of $\fA$ give rise to irreducible representations. More precisely, they proved the following theorem.

\begin{thm}[{\cite[Hauptsatz 1]{NW76}}]\label{thm:nw1}
    Let $(M,Q)$ be a quadratic module of type $D_{p^\ld}$, $N_{p^\ld}$, or non-extremal $R_{p^\ld}^\s(r,t)$, and let $\fA \leq \Aut(M,Q)$ be the corresponding subgroup.
    If $\chi \in \hat{\fA}$ is primitive and not an involution, then $W(M,Q,\chi)$ is an irreducible representation of $\SL$ of level $p^\ld$.
    
    If $\chi_1$,  $\chi_2 \in \hat{\fA}$ are primitive and not involutions, then $W(M,Q,\chi_1)$ is equivalent to $W(M,Q,\chi_2)$ if, and only if, $\chi_1 = \chi_2$ or $\chi_1 = \ol{\chi_2}$. 
\end{thm}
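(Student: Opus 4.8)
The plan is to realize the decomposition $V = \BC^M = \bigoplus_{\chi\in\hat\fA} V^\chi$ as part of a Howe-type duality for the mutually commuting pair $(\SL,\O(Q))$ acting on $V$, and to read off both irreducibility and the equivalence criterion from the representation theory of $\O(Q)$. First I would record the commutation data: directly from \eqref{eq:weil-rep}, every $\omega\in\Aut(M,Q)$ acts on $V$ by $\varphi_\omega(\d_a)=\d_{\omega a}$ and commutes with both $W(M,Q)(\fs)$ and $W(M,Q)(\ft)$, since $Q$ and $B$ are $\Aut(M,Q)$-invariant. For a binary quadratic module the orthogonal group is the generalized dihedral group $\O(Q)=\fA\rtimes\angleb{\k}$, with $\fA$ the rotations (the norm-one units, resp.\ $A_\ld^\times$ in type $D_{p^\ld}$) and $\k$ a reflection; Lemma \ref{lem:k-a} supplies the defining relation $\k\e\k=\e\inv$. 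Hence the subalgebra of $\End(V)$ generated by $\{\varphi_\omega\}$ is the image of $\BC[\fA\rtimes\angleb{\k}]$ and lies in the commutant of $W(M,Q)(\SL)$.

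The crux is the double-commutant statement that, on the part of $V$ of exact level $p^\ld$, this image is the \emph{entire} commutant $\End_{\SL}(V)$. Granting this, the $\SL$-isotypic decomposition of $V$ is governed by $\Irr(\fA\rtimes\angleb{\k})$. Each two-dimensional irreducible of $\fA\rtimes\angleb{\k}$ is indexed by a pair $\{\chi,\ol\chi\}$ with $\chi\neq\ol\chi$; it pairs with a single irreducible $\SL$-module occurring with multiplicity two, whose two $\fA$-weight spaces are exactly $V^\chi$ and $V^{\ol\chi}$. This forces $W(M,Q,\chi)$ to be irreducible and isomorphic to $W(M,Q,\ol\chi)$, recovering the first claim and the forward direction of the equivalence (already witnessed concretely by $\varphi_\k$ in the text). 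The one-dimensional irreducibles of $\fA\rtimes\angleb{\k}$ correspond to involutive $\chi$ and account for the further splitting $V^\chi=V^\chi_+\oplus V^\chi_-$ of Proposition \ref{prop:V-chi-pm}, which is exactly why the hypothesis must exclude involutions. The role of primitivity is to pin down the level: a character is primitive precisely when its block of $V$ does not descend to $\qsl{p^{\ld-1}}$, so that $W(M,Q,\chi)(\ft)$ attains order $p^\ld$. I would confirm this by exhibiting an orbit representative $a\in\theta^\chi$—one survives precisely because primitivity forces some $\e$ with $\chi(\e)\neq1$ fixing $pM$, keeping $\Stab(a)\subseteq\ker(\chi)$—for which $Q(a)$ has denominator $p^\ld$, and then invoking \cite[Lem.~A.1]{DLN15}.

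I expect the double-commutant input to be the main obstacle, and primitivity to be exactly the hypothesis that makes it hold; concretely it amounts to bounding the intertwiner space by what the dihedral character theory predicts. If isolating the clean duality statement proves awkward, the fallback is the explicit route of \cite{NW76}: the basis $\{f_a^\chi\}_{a\in\theta^\chi}$ diagonalizes $W(M,Q,\chi)(\ft)$ with eigenvalues $\be(Q(a))$, and using \eqref{eq:weil-rep} one computes the matrix entries $\angleb{\fs\, f_a^\chi,\, f_b^\chi}$ as normalized Gauss sums over $\fA$-orbits. One then shows that, for primitive non-involutive $\chi$, the $\ft$-eigenvalues have multiplicity one on each $\k$-pair and the relevant $\fs$-entries are nonvanishing, so that $\fs$ and $\ft$ act transitively on the basis and no proper invariant subspace can exist.

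For the remaining direction of the equivalence statement, I would recover the unordered pair $\{\chi,\ol\chi\}$ as an invariant of $W(M,Q,\chi)$—for instance from the joint $\ft$-eigenvalue and $\fA$-weight data carried by the commuting $\O(Q)$-action—so that two primitive non-involutive characters yielding equivalent representations must have the same pair, i.e.\ $\chi_1=\chi_2$ or $\chi_1=\ol{\chi_2}$. Combined with the forward direction this gives the full classification.
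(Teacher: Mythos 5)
The paper itself offers no proof of this statement: it is quoted verbatim from Nobs--Wolfart \cite{NW76} (Hauptsatz 1), so your argument has to stand entirely on its own, and it does not. The central step---that on the exact-level-$p^\ld$ part of $V$ the commutant of $W(M,Q)(\SL)$ is precisely the image of $\BC[\fA\rtimes\angleb{\k}]$---is not proved but ``granted,'' and it is not a technical lemma: it is essentially the theorem itself, the very thing Nobs--Wolfart's long computation establishes. Note first that without the exact-level restriction the claim is simply false: mutual commutation with the generalized dihedral group would force $V^\chi$ to be irreducible for \emph{every} non-involutive $\chi$, primitive or not, whereas non-primitive non-involutive characters generally give reducible $W(M,Q,\chi)$. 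For instance, in type $D_{p^2}$ with $p\ge 5$, a non-involutive character factoring through $A_1^\times$ is non-primitive and has $\dim V^\chi=p(p+1)+(p+1)=(p+1)^2$, which exceeds $p(p+1)$, the largest dimension of an irreducible representation of $\qsl{p^2}$ in the Nobs--Wolfart classification; this is exactly why the primitivity hypothesis appears in the statement, so any strategy in which primitivity enters only through the level cannot be right. Second, even granting the duality on the new part $V_{\mathrm{new}}$, your argument only shows that $V^\chi\cap V_{\mathrm{new}}$ is irreducible; to conclude that $W(M,Q,\chi)=V^\chi$ is irreducible you must also show that for primitive $\chi$ the \emph{whole} of $V^\chi$ has exact level $p^\ld$, i.e.\ contains no lower-level constituents. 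Your level argument (one orbit representative with $Q(a)$ of denominator $p^\ld$, plus \cite[Lem.~A.1]{DLN15}) determines only the level of the possibly reducible representation $W(M,Q,\chi)$, which is the lcm of the levels of its constituents; it does not exclude lower-level summands. (Indeed $V^\chi$ always contains $\ft$-eigenvalues of small order, e.g.\ $\be(Q(1,0))=1$ in type $D_{p^\ld}$, so no eigenvalue argument can settle this.)

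The two fallbacks do not repair this. The ``explicit route''---nonvanishing $\fs$-entries plus transitivity on the basis---proves irreducibility only when the $\ft$-eigenvalues on $V^\chi$ are multiplicity-free, since only then must an invariant subspace be spanned by a subset of the eigenbasis; here they are far from multiplicity-free (already for $D_{p^\ld}$ with $\chi$ primitive one has $|\theta^\chi| = p^{\ld-1}(p+1) > p^\ld$, while the eigenvalues $\be(Q(a))$ take at most $p^\ld$ values, so repetitions are forced by pigeonhole), and connectivity of nonzero entries then proves nothing. For the ``only if'' half of the equivalence statement, recovering $\{\chi,\ol\chi\}$ from ``the $\fA$-weight data carried by the commuting $\O(Q)$-action'' is circular: $\fA$ acts on $V^\chi$ by the scalars $\ol{\chi}(\e)$, which is extra structure placed on the space, not an invariant of the abstract $\SL$-module, and an $\SL$-equivalence $V^{\chi_1}\to V^{\chi_2}$ need not intertwine these scalar actions. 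The assertion that distinct weight pairs yield non-isomorphic $\SL$-modules is again exactly the multiplicity-one/double-commutant statement you have not proved. So all three prongs of the proposal reduce to the same unproven---and, in its unrestricted form, false---duality claim.
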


The case of $\chi^2 = 1$ is not directly covered by the theorem, but $W(M, Q, \chi)_{\pm}$ is irreducible in many cases. The precise details can be found in the complete list of irreducible representations of $\qsl{p^\ld}$ in \cite[pp.~521-525]{NW76}.

\begin{defn}
Let $p$ be a prime and $\ld \in \BN$. We will call an irreducible representation of $\qsl{p^\ld}$ that is equivalent to $W(M, Q, \chi)$ or $W(M, Q, \chi)_\pm$ for some binary quadratic module $(M,Q)$ a \emph{standard irreducible representation}. 
\end{defn}

Combining Propositions \ref{prop:V-chi} and \ref{prop:V-chi-pm}, we have:
\begin{prop}\label{prop:std}
For any prime $p$ and positive integer $\ld$, every standard irreducible representation of $\qsl{p^\ld}$ is symmetrizable.\qed
\end{prop}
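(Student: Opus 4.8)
The plan is to treat this as a bookkeeping corollary of the two preceding propositions, with the only real work being to unwind the definition of a standard irreducible representation and to confirm that symmetrizability is an invariant of equivalence. First I would record the latter fact: if $\rho \cong \rho'$ and $\rho'$ is equivalent to a symmetric representation, then so is $\rho$, since equivalence of representations is transitive. This reduces the claim to showing that each model representation appearing in the definition---namely $W(M,Q,\chi)$ and $W(M,Q,\chi)_\pm$ for a binary quadratic module $(M,Q)$---is itself symmetrizable.

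For the family $W(M,Q,\chi)$, I would simply cite Proposition \ref{prop:V-chi}, which already establishes symmetrizability for every $\chi \in \hat{\fA}$ over any binary quadratic module, with no restriction on $\chi$. For the family $W(M,Q,\chi)_\pm$, I would first observe that this notation is meaningful only in the regime $\chi^2 = 1$ together with $\varphi_\k\restr{V^\chi} \ne \id$: when $\varphi_\k\restr{V^\chi} = \id$, one has $V^\chi = V^\chi_+$ and $V^\chi_- = 0$, so $W(M,Q,\chi)_+$ coincides with $W(M,Q,\chi)$ and is already handled by the previous case. In the remaining regime the hypotheses of Proposition \ref{prop:V-chi-pm} are met verbatim, so both $W(M,Q,\chi)_\pm$ are symmetrizable. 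Combining the two cases dispatches every standard irreducible representation.

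I do not expect a genuine obstacle here, since the substantive content---constructing a $\k$-symmetric orthonormal eigenbasis and invoking Proposition \ref{prop:kappa}---was already carried out in Propositions \ref{prop:V-chi} and \ref{prop:V-chi-pm}. The only points requiring care are that the case split over the two model families is exhaustive (which is immediate from the definition of a standard irreducible representation) and that equivalence-invariance is genuinely invoked, since such a representation is by definition only equivalent to, rather than literally equal to, one of the two models.
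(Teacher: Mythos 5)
Your proof is correct and takes essentially the same approach as the paper, which likewise obtains Proposition \ref{prop:std} immediately by combining Propositions \ref{prop:V-chi} and \ref{prop:V-chi-pm}. Your extra bookkeeping --- that symmetrizability is invariant under equivalence, and that the case $\varphi_\k\restr{V^\chi} = \id$ collapses $W(M,Q,\chi)_+$ to $W(M,Q,\chi)$ --- is sound and merely makes explicit what the paper leaves implicit.
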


\subsection{Special irreducible representations of \texorpdfstring{$\qsl{2^\ld}$}{}}\label{subsec:spc-rep}

For a quadratic module $(M,Q)$ of type $R_{2^\ld}^\sigma(r,t)$ and $\chi \in \hat \fA$, we denote the representation $W(M,Q,\chi)$ of $\qsl{2^\ld}$ by $R_{2^\ld}^\sigma(r,t, \chi)$.  A representation of the form $R_{2^\ld}^\sigma(r, t, \chi)$ with $\chi$ not primitive is usually reducible, but some cases with $\sigma = \ld-2$ or $\ld - 3$ will contain a unique irreducible subrepresentation of level $2^\ld$ that does not occur among the standard representations \cite[Sec.~6]{NW76}. We will call the irreducible representations appearing this way \emph{special}; they are denoted by $R_{2^\ld}^\sigma(r,t, \chi)_1$. We list all the special irreducible representations (up to equivalence), together with a choice of basis for each, in Table \ref{tbl:special}.

\begin{table}[ht]
\centering
$\def\arraystretch{1.2}
\begin{array}{|c|c|l|}
\hline
\text{Type} & M & \multicolumn{1}{|c|}{\text{Basis in \cite{NW76}}} \\
\hline\hline
R_{2^2}^0(1,3,\chi_1)_1 & A_1 \oplus A_1 & 
\d_{(1,0)},\, \d_{(0,1)},\,  \d_{(0,0)}-\d_{(1,1)} \\
\hline %%%%%%%%%%%%%%%%%%%%%%%%%%%%%%%%%%%%%%%%%%%%%%%%%%%%%%%%%
\multirow{2}{*}{$R_{2^3}^0(1,3,\chi_1)_1$} & 
\multirow{2}{*}{$A_2 \oplus A_2$} &
\d_{(0,0)}-\d_{(2,2)},\, \d_{(2,0)}-\d_{(0,2)},\,  \d_{(1,0)}+\d_{(-1,0)}, \\
&&\d_{(1,2)}-\d_{(-1,2)},\,  \d_{(0,1)}+\d_{(0,-1)},\,  \d_{(2,1)}+\d_{(2,-1)} \\
\hline %%%%%%%%%%%%%%%%%%%%%%%%%%%%%%%%%%%%%%%%%%%%%%%%%%%%%%%%%%
R_{2^4}^2(r,3,\chi_1)_1 & \multirow{2}{*}{$A_3 \oplus A_1$} &
\d_{(1,0)}+\d_{(-1,0)},\,  \d_{(3,0)}+\d_{(-3,0)},\,  \d_{(1,1)}+\d_{(-1,1)}, \\
r\in\{1,3\} & & 
\d_{(3,1)}+\d_{(-3,1)},\, \d_{(0,0)}-\d_{(4,0)},\, \d_{(0,1)}-\d_{(4,1)} \\
\hline %%%%%%%%%%%%%%%%%%%%%%%%%%%%%%%%%%%%%%%%%%%%%%%%%%%%%%%%%%%
R_{2^5}^{2}(r,1,\chi_1)_1 & \multirow{2}{*}{$A_4 \oplus A_2$} & 
\tilde{f}_{a}^{\chi_{1}} \text{ for } a \in \{1,3,5,7\} \times \{0,1\},\,
\tf_{(2,0)}^{\chi_{1}}-\tf_{(6,0)}^{\chi_{1}}, \\
r \in \{1,3\} & &  \tf_{(2,2)}^{\chi_1}-\tf_{(6,2)}^{\chi_1},\, \tf_{(0,0)}^{\chi_1}-\tf_{(8,0)}^{\chi_1},\, \tf_{(0,2)}^{\chi_1}-\tf_{(8,2)}^{\chi_1} \\
\hline %%%%%%%%%%%%%%%%%%%%%%%%%%%%%%%%%%%%%%%%%%%%%%%%%%%%%%%%%%
R_{2^5}^{2}(r,1,\chi_2)_1 & \multirow{2}{*}{$A_4 \oplus A_2$} & 
\tilde{f}_{a}^{\chi_2} \text{ for } a \in \{1,3,5,7\} \times \{0,1\}, \\
r \in \{1,3\} & & \tf_{(4,0)}^{\chi_2}, \tf_{(4,2)}^{\chi_2},\, \tf_{(2,0)}^{\chi_2}-\tf_{(6,0)}^{\chi_2},\, \tf_{(2,2)}^{\chi_2}-\tf_{(6,2)}^{\chi_2} \\
\hline %%%%%%%%%%%%%%%%%%%%%%%%%%%%%%%%%%%%%%%%%%%%%%%%%%%%%%%%%%%
R_{2^{6}}^4(r,t,\chi_1)_1 & \multirow{2}{*}{$A_5 \oplus A_1$} & \tf_{(x,0)}^{\chi_1} \text{ for odd } 1 \le x \le 15,\,
\tf_{(0,0)}^{\chi_1}-\tf_{(16,0)}^{\chi_1}, \\
(r,t) \in \{1,3,5,7\} \times \{1,3\} & &  \tf_{(4,0)}^{\chi_1}-\tf_{(12,0)}^{\chi_1},\, \tf_{(2,1)}^{\chi_1}-\tf_{(14,1)}^{\chi_1},\, \tf_{(6,1)}^{\chi_1} -\tf_{(10,1)}^{\chi_1} \\
\hline %%%%%%%%%%%%%%%%%%%%%%%%%%%%%%%%%%%%%%%%%%%%%%%%%%%%%%%%%%%
R_{2^{\ld}}^{\ld-3}(r,t,\chi)_1 & \multirow{3}{*}{$A_{\ld-1}\oplus A_2$} &
\text{See table at \cite[p.~512]{NW76}. The basis elements are of}\\
(r,t) \in \{1,3,5,7\} \times \{1,3\},  & & 
\text{the form } \tf_{a}^{\chi} \text{ for some } a \in Y_0
\text{, or } \tf_{(x,y)}^\chi - \tf_{(2^{\ld-2}-x,y)}^\chi\\
\ld \ge 7, \chi \in \angleb{\chi_3} & & 
\text{for some } (x,y) \in Y_1\,.\\
\hline
\end{array}$
\caption{Special irreducible representations.}
\label{tbl:special}
\end{table}

In this table we use the following notation. Let $\chi_1$ denote the trivial character. For $R_{2^5}^{2}(r,1)$, we have $\fA = \angleb{(-1,0)} \times \angleb{(9,2)}$, and $\chi_2$ denotes the character determined by $\ker(\chi_2) = \angleb{(9,2)}$. Finally, for $R_{2^\ld}^{\ld - 3}(r,t)$ with $\ld \ge 7$, we have $\fA = \angleb{(-1,0)}\times \angleb{\alpha}$, where $\alpha = (1-2^{\ld-4}t-2^{2\ld-9},\, 1)$, and $\chi_3$ denotes the character determined by $\ker(\chi_3) = \angleb{(-1,0)}$. The sets $Y_0$ and $Y_1$ are defined as the following disjoint unions:
\[\begin{split}
    Y_0 &\defeq \{(x, 0) \mid x \text{ odd}\} \sqcup \{(x, y) \mid y \in \{0, 2\},\, x = 4-2y + 8j,\, 0 \le j \le 2^{\ld - 6}-1\}\,, \\
    Y_1 &\defeq \{(x, y) \mid y \in \{0, 2\},\, x = 2y+8j,\, 0 \le j \le 2^{\ld - 6} - 1\} \sqcup \{(x, 0) \mid x = 2 + 4k,\, 0 \le k \le 2^{\ld - 5} - 1\}\,.
\end{split}\]

We may then derive the following proposition.

\begin{prop}\label{prop:spc}
    Every special irreducible representation in Table \ref{tbl:special} is symmetrizable.
\end{prop}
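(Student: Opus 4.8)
The plan is to reuse the symmetrizability criterion from Proposition \ref{prop:kappa}, applied with the involution $\k \in \Aut(M,Q)$ attached to each type $R_{2^\ld}^\s(r,t)$, exactly as in the proof of Proposition \ref{prop:V-chi}. The obstruction is that the special irreducible $R_{2^\ld}^\s(r,t,\chi)_1$ sits inside $W(M,Q,\chi)$ as a \emph{proper} subrepresentation, so the basis $\cB^\chi$ of the full space $V^\chi$ does not restrict to the relevant subspace $Y$. Thus the task is reduced to a per-row verification: for each entry of Table \ref{tbl:special}, check that the displayed spanning set (or an orthonormalization of it) satisfies conditions (i) and (ii) of Proposition \ref{prop:kappa}. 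Condition (i), that each listed basis vector is a $\ft$-eigenvector, will follow as in \eqref{eq:tfachi}: each vector is supported on a single $\fA$-orbit, or on a union of orbits sharing the same value of $Q$, so $\ft$ acts by a scalar $\be(Q(a))$.

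First I would record how $\ol{\varphi}_\k$ acts on the relevant vectors. By \eqref{eq:phi-k-f} we have $\ol{\varphi}_\k(f_a^\chi) = f_{\k a}^\chi$ (equivalently $\ol{\varphi}_\k(\tf_a^\chi)=\tf_{\k a}^\chi$), and for these types $\k$ is complex conjugation $(x,y)\mapsto(x,-y)$. The whole verification therefore comes down to tracking the action $(x,y)\mapsto(x,-y)$ on the support indices listed in each row and confirming that the displayed set is closed under it up to scalars. For the rows written in the $\d$-basis (types $R_{2^2}^0$ and $R_{2^3}^0$) I would compute $\ol{\varphi}_\k$ directly on each $\d$-combination; for the rows written in the $\tf_a^\chi$-basis I would use $\ol{\varphi}_\k(\tf_a^\chi)=\tf_{\k a}^\chi$ and the fact that $\chi\in\angleb{\chi_3}$ (or $\chi\in\{\chi_1,\chi_2\}$) is real-valued on the relevant elements. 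In each case a listed vector $f$ either has $\k$ acting trivially on its support (so $\ol{\varphi}_\k(f)=\pm f$, i.e.\ $f,\ol{\varphi}_\k(f)$ are linearly dependent, covering case (ii) vacuously), or $\ol{\varphi}_\k(f)$ is again one of the listed vectors, as required.

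The only genuinely delicate row is the last, $R_{2^\ld}^{\ld-3}(r,t,\chi)_1$ for $\ld\ge 7$, whose basis is described via the sets $Y_0$ and $Y_1$ at \cite[p.~512]{NW76} rather than written out. Here I would argue structurally: the involution $\k$ sends $(x,y)\mapsto(x,-y)$, so it preserves the slices $y\in\{0,2\}$ used to define $Y_0$ and $Y_1$ (since $-0\equiv 0$ and $-2\equiv 2$ in $A_2$), hence it permutes the index set of the spanning vectors; one then checks that the pairing $(x,y)\leftrightarrow(2^{\ld-2}-x,y)$ used to form the difference vectors $\tf_{(x,y)}^\chi-\tf_{(2^{\ld-2}-x,y)}^\chi$ commutes with $\k$, so $\ol{\varphi}_\k$ carries each listed difference to another listed difference (or its negative) and each $\tf_a^\chi$ with $a\in Y_0$ to another such. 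I expect this closure-under-$\k$ check for the $Y_0,Y_1$ row to be the main obstacle, as it requires unpacking the explicit indexing in \cite{NW76}.

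Having verified (i) and (ii) for every row of Table \ref{tbl:special}, I invoke Proposition \ref{prop:kappa} to conclude that each $R_{2^\ld}^\s(r,t,\chi)_1$ is symmetrizable, completing the proof. Combined with Proposition \ref{prop:std}, this shows that every standard and every special irreducible representation of $\qsl{2^\ld}$ is symmetrizable; since by \cite{NW76} these exhaust all irreducible representations of $\qsl{p^\ld}$, Proposition \ref{prop:main}, and hence Theorem \ref{thm:main}, will follow.
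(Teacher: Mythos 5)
Your proposal is correct and takes essentially the same approach as the paper: the paper's proof also hinges on the involution $\k$, the identity $\ol{\varphi}_\k(\tf_a^\chi) = \tf_{\k a}^\chi$ from \eqref{eq:phi-k-f}, and row-by-row verification that each tabulated basis is an orthogonal $\ft$-eigenbasis via exactly the $Q$-value equalities you describe (e.g.\ $Q(2^{\ld-2}-2u,v)=Q(2u,v)$ for the last row). The only cosmetic difference is that the paper invokes Lemma \ref{lem:sym-1} directly rather than Proposition \ref{prop:kappa}, since every tabulated basis vector turns out to be \emph{fixed} by $\ol{\varphi}_\k$ --- in particular, $\k$ acts trivially on all indices $(x,y)$ with $y\in\{0,2\}$, so the closure check you flag as the main obstacle in the $Y_0,Y_1$ row is vacuous, and your condition (ii) only ever occurs in its degenerate linearly-dependent case.
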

\begin{proof}
We will apply Lemma \ref{lem:sym-1} to show that each basis in the table is a symmetric basis for the corresponding representation.

First, we observe that each basis in the table is an orthogonal basis.  Indeed, this is clear for the first six rows. For the last row, it follows from the fact that $Y_0$ and $Y_1$ are disjoint.

Next, we claim that each basis element in the table is fixed by $\ol{\varphi}_\k$, as follows.  Recall that $\k(x,y) = (x,-y)$ for type $R^\s_{2^\ld}(r,t)$.  It is immediate from this that each basis element in the first three rows is fixed by $\ol\varphi_\k$.

For $R_{2^5}^2(r,1)$, direct computation yields $(9,2) \cdot (x,1) = (x,-1)$ for each $x \in \{1,3,5,7\}$. If $\chi = \chi_1$ or $\chi_2$, then $\chi^2 = 1$, so 
\[\ol\varphi_\k(\tf_{(x,1)}^\chi) = \varphi_\k(\tf_{(x,1)}^\chi) = \tf_{(x,-1)}^\chi = \chi(9,2)\tf_{(x,1)}^\chi = \tf_{(x,1)}^\chi\] 
for any $x \in \{1,3,5,7\}$. Moreover, since $M = A_4 \oplus A_2$, for any $(x, y) \in A_4 \times \{0,2\}$, we have \[\ol\varphi_k(\tf_{(x, y)}^{\chi}) = \varphi_k(\tf_{(x, y)}^{\chi}) = \tf_{(x, -y)}^{\chi} = \tf_{(x, y)}^{\chi}\,.\] This confirms that each basis element in the $4\nth$ and $5\nth$ rows is fixed by $\ol\varphi_\k$.

For $R_{2^6}^4(r,t)$, $M = A_5\oplus A_1$, so $\k$ acts trivially on $M$. Hence, for any $a \in M$, the function $\tf_a^{\chi_1}$ is fixed by $\ol\varphi_\k$. Since $\ol\varphi_\k$ is antilinear, it also fixes the other basis elements, as each is a $\BZ$-linear combination of $\tf_a^{\chi_1}$.

Similarly, for $R_{2^\ld}^{\ld - 3}(r,t)$ with $\ld \ge 7$, we have $M = A_{\ld - 1} \oplus A_2$, so (again) $\k(x,y) = (x,y)$ for any $(x, y) \in A_{\ld-1}\times \{0,2\}$. Therefore, for any $(x, y) \in A_{\ld-1}\times \{0,2\}$, the function $\tf_{(x,y)}^\chi$ is fixed by $\ol\varphi_\k$. Since $\ol\varphi_\k$ is antilinear, it also fixes the rest of the basis elements.

Finally, we claim that each basis element in the table is an eigenvector for $\ft$. Indeed, for any quadratic module $(M, Q)$ of type $R_{2^\ld}^\sigma(r, t)$, \eqref{eq:weil-rep} and \eqref{eq:tfachi} show that any function of the form $\d_a$ or $\tf_{a}^\chi$ for $a \in M$ and $\chi \in \fA$ is an eigenvector of $\ft$ with eigenvalue $\be(Q(a))$. To show a basis element in Table \ref{tbl:special} is an eigenvector of $\ft$, it suffices to show that the value of $Q(a)$ is the same for each index $a \in M$ among its summands.  Recall that $Q(x,y) = r(x^2 + 2^\sigma ty^2)/2^\ld \in \BQ/\BZ$ in this case. In particular, for $(x,y)\in M$, we have $Q(x, y) = Q(-x,y) = Q(x,-y)$. Our claim then follows from the computations below.
\begin{itemize}
\item 
For $R_{2^2}^0(1,3)$, $Q(0,0) = Q(1,1) = 0$.
\item 
For $R_{2^3}^0(1,3)$, $Q(0,0) = Q(2,2) = 0$ and $Q(2,0) = Q(0,2) = 1/2$.
\item
For $R_{2^4}^2(r,3)$, $Q(0,0) = Q(4,0) = 0$ and $Q(0,1) = Q(4,1) = 3r/4$.
\item
For $R_{2^5}^2(r,1)$, $Q(2,0) = Q(6,0) = r/8$, $Q(2,2) = Q(6,2) = 5r/8$, $Q(0,0) = Q(8,0) = 0$, and $Q(0,2) = Q(8,2) = r/2$.
\item
For $R_{2^6}^4(r,t,\chi)_1$, the basis elements are either of the form $\tf^{\chi_{1}}_{a}$ for some $a\in M$, or of the form $\tf^{\chi_1}_{(2x,y)} - \tf^{\chi_1}_{(16-2x,y)}$ for some $(2x,y) \in M$. As such, it suffices to verify the following equality for any $(2x,y)\in M$:
\[Q(16-2x,y) = \frac{r((16-2x)^2 + 16ty^2)}{64} =  \frac{r(4x^2+16ty^2)}{64} =Q(2x,y)\,.\]
\item
For $R_{2^\ld}^{\ld-3}(r,t)$ with $\ld \ge 7$, any element in $Y_1$ is of the form $(2u, v) \in A_{\ld-1} \times \{0, 2\}$ by definition. Now, we find
\[
Q(2^{\ld-2}-2u, v) = \frac{r((2^{\ld-2}-2u)^2 + 2^{\ld-3}tv^2)}{2^\ld} = \frac{r(2^{2\ld-4} - 2^\ld u + 4u^2 + 2^{\ld-3}tv^2)}{2^\ld} = Q(2u,v)\,.
\]
\end{itemize}

In summary, each of the bases in Table \ref{tbl:special} is an orthogonal eigenbasis for $\ft$, and each basis element thereof is fixed by $\ol\varphi_\k$. Therefore, the normalization of these bases are symmetric bases for the corresponding representations by Lemma \ref{lem:sym-1}, and this completes the proof. 
\end{proof}

\subsection{Unary representations}\label{sec:r_unary}

\emph{Unary quadratic modules} are those of type $R_{p^\ld}(r)$, where $p$ is an odd prime and $M = A_\ld$ is cyclic. In this case, it is easy to see $\Aut(M,Q) = \{\pm 1\}$, and we define
\[
    \k : M \to M\,,\qquad a \mapsto -a\,.
\]
The representation $W(M,Q)$, denoted simply by $R_{p^\ld}(r)$, decomposes into two subrepresentations $R_{p^\ld}(r)_\pm$ corresponding to the $(\pm1)$-eigenspaces of $\varphi_\k$. For $\ld = 1$, these are irreducible.  For $\ld \geq 2$, each contains a unique irreducible subrepresentation of level $p^\ld$, denoted $(R_{p^\ld}(r)_\pm)_1$. Specifically, \cite[Satz 8]{NW76} shows that
\[
    R_{p^\ld}(r) \cong (R_{p^\ld}(r)_+)_1 \oplus (R_{p^\ld}(r)_-)_1 \oplus R_{p^{\ld-2}}(r)
\]
(wherein $R_1(r)$ is the trivial  representation). We will call the irreducible representations $R_p(r)_\pm$ ($\ld = 1$) and $(R_{p^{\ld}}(r)_\pm)_1$ ($\ld \ge 2$) for any odd prime $p$ the \emph{unary irreducible representations} of $\qsl{p^\ld}$.

With some minor changes from \cite{NW76}\footnote{Cf.~\cite[p.~509]{NW76}. With $g_{y,k,\e}$ as defined in loc.~cit., here we have $h_{y,k,\e,\eta} = \frac{1}{2\sqrt{p}} ( g_{y,k,\e} + \e\eta g_{(p^{\ld-1}-y),k,\e} )$.}, an orthonormal basis for each unary irreducible representation can be chosen as follows. For $x \in M = A_\ld$ and $\e \in \{\pm 1\}$, define
\begin{equation*}
\tilde{f}_{x,\e} \defeq \sqrt{\e}\d_x + \ol{\sqrt{\e}} \d_{-x} = \sqrt{\e}\d_x + \ol\varphi_{\k}(\sqrt{\e}\d_x)
\qquad\text{and}\qquad
f_{x,\e} \defeq \frac{1}{\sqrt{2}}\tilde{f}_{x,\e}\,.
\end{equation*}
In particular, we have
\begin{equation}\label{eq:unary-k}
\ol\varphi_\k(\tf_{x, \e}) = \tf_{x, \e} \qquad\text{and}\qquad
\ol\varphi_\k(f_{x, \e}) = f_{x, \e}\,.
\end{equation}
Note also that, by \eqref{eq:weil-rep} and $Q(x) = Q(-x) = rx^2/p^{\ld}$, $\tf_{x,\e}$ and $f_{x, \e}$ are eigenvectors of $\ft$.

Further, for $0 \le y < p^{\ld-1}$, $1 \le k < p$, and $\e, \eta \in \{\pm 1\}$, define
\begin{equation*}\begin{split}
    h_{y,k,\e,\eta} &\defeq \frac{1}{\sqrt{p}}\sum_{a \in A_1} \left(\sqrt{\eta}\zeta_p^{ka} + \ol{\sqrt{\eta}\zeta_p^{ka}}\right) \tilde{f}_{(py+ap^{\ld-1}),\,\e}\,.
\end{split}\end{equation*}
By \eqref{eq:unary-k} and the antilinearity of $\ol\varphi_\k$, we have $h_{y,k,\e,\eta} = \ol\varphi_\k(h_{y,k,\e, \eta})$. Moreover, for any $\ld \ge 2$ and any integers $y$ and $a$, we have 
\[
    Q(py+ap^{\ld-1}) = \frac{r(py+ap^{\ld-1})^2}{p^{\ld}} = \frac{r((py)^2 + 2ap^\ld + a^2p^{2\ld-2})}{p^{\ld}} = \frac{r(py)^2}{p^\ld} = Q(py) \in \BQ/\BZ\,.
\]
Therefore, for $\ld \ge 2$, $h_{y,k,\e,\eta}$ is an eigenvector of $\ft$.
Then, denoting
\begin{equation*}\begin{split}
    \cF_\e &\defeq \left\{f_{x,\e} ~\middle\vert~ x \in M^\times \text{ with } 1 \leq x \leq \frac{p^\ld - 1}{2}\right\}\,, \\
    \cH_\e &\defeq \left\{h_{y,k,\e,\eta} ~\middle\vert~ 1 \leq y \leq \frac{p^{\ld-2}-1}{2},~1 \leq k \leq \frac{p-1}{2},~\eta \in \{\pm1\}\right\}\,,
\end{split}\end{equation*}
we have the following orthonormal eigenbases for $\ft$:
\begin{itemize}
    \item For $R_p(r)_+$,
    \(
        \cB \defeq \cF_{+1} \cup \{ \d_0\}\,.
    \)
    \item For $R_p(r)_-$,
    \(
        \cB \defeq \cF_{-1}\,.
    \)
    \item For $(R_{p^\ld}(r)_\e)_1$ with $\ld \geq 2$,
    \begin{equation*}\begin{split}
        \cB \defeq \cF_{\e} \cup \cH_{\e} \cup \left\{\frac{1}{\sqrt{2}}h_{0,k,\e,\e}~\middle|~1 \leq k \leq \frac{p-1}{2}\right\}\,.
    \end{split}\end{equation*}
\end{itemize}
By the above discussions, for each unary irreducible representation, the corresponding basis $\cB$ is an orthonormal $\ft$-eigenbasis that is fixed by $\ol\varphi_\k$ elementwise. Therefore, by Lemma \ref{lem:sym-1}, $\cB$ is a symmetric basis. In other words, we have the following proposition.

\begin{prop}\label{prop:unary}
Every unary irreducible representation is symmetrizable. \qed
\end{prop}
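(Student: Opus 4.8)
The plan is to apply Lemma~\ref{lem:sym-1} directly, exactly as was done for the special irreducible representations in Proposition~\ref{prop:spc}. The unary irreducible representations are subrepresentations of $W(M,Q)$ for $M = A_\ld$ of type $R_{p^\ld}(r)$, and the ambient $W(M,Q)$ is symmetric by \eqref{eq:weil-rep}. With $U$ taken to be $\varphi_\k$ (equivalently, the operator induced by the involution $\k : a \mapsto -a$, which lies in $\Aut(M,Q)$ since $Q(-a) = Q(a)$), the commutation hypothesis of Lemma~\ref{lem:sym-1} holds because $\k$ is an automorphism of the quadratic module and so $\varphi_\k$ intertwines $W(M,Q)$ with itself. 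Thus the entire task reduces to exhibiting, for each of the three families listed ($R_p(r)_+$, $R_p(r)_-$, and $(R_{p^\ld}(r)_\e)_1$ for $\ld \ge 2$), an orthonormal eigenbasis for $\rho(\ft)$ on the relevant subspace that is fixed pointwise by $\ol\varphi_\k$.

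First I would verify the three defining properties of the candidate bases $\cB$ given above, in order. \emph{Orthonormality}: the vectors $f_{x,\e}$ are built from $\d_x, \d_{-x}$ with $x$ ranging over a half-system of $M^\times$, so distinct vectors involve disjoint supports and are manifestly orthonormal; the $h_{y,k,\e,\eta}$ are discrete-Fourier-type combinations over the fibers $\{py + ap^{\ld-1}\}$, and one checks orthonormality across the ranges of $y$, $k$, and $\eta$ using orthogonality of the characters $a \mapsto \z_p^{ka}$ together with the disjointness of the fibers for distinct $y$. \emph{The $\ft$-eigenvector property}: this is recorded in the excerpt — each $\tf_{x,\e}$ is a $\ft$-eigenvector because $Q(x) = Q(-x)$, and each $h_{y,k,\e,\eta}$ is a $\ft$-eigenvector for $\ld \ge 2$ because the displayed computation gives $Q(py + ap^{\ld-1}) = Q(py)$, so all summands share the eigenvalue $\be(Q(py))$. \emph{Fixed by $\ol\varphi_\k$}: this is \eqref{eq:unary-k} for the $f_{x,\e}$, and for the $h_{y,k,\e,\eta}$ it follows from \eqref{eq:unary-k} together with the antilinearity of $\ol\varphi_\k$ and the fact that the coefficient $\sqrt{\eta}\z_p^{ka} + \ol{\sqrt{\eta}\z_p^{ka}}$ is real.

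Next I would confirm that each listed $\cB$ is in fact a \emph{basis} of the correct subspace, not merely an orthonormal set — that is, that it has the right cardinality to span $(R_{p^\ld}(r)_\e)_1$ (respectively $R_p(r)_\pm$). This is a dimension count: one matches the number of vectors $f_{x,\e}$, $h_{y,k,\e,\eta}$, and the half-norm vectors $\tfrac{1}{\sqrt2}h_{0,k,\e,\e}$ against the known dimensions of the irreducible constituents in the decomposition $R_{p^\ld}(r) \cong (R_{p^\ld}(r)_+)_1 \oplus (R_{p^\ld}(r)_-)_1 \oplus R_{p^{\ld-2}}(r)$ from \cite[Satz~8]{NW76}. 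The appearance of the single half-normalized vector $\tfrac{1}{\sqrt2}h_{0,k,\e,\e}$ (rather than the full pair indexed by $\eta$) at $y = 0$ is the place where the $\e$-eigenspace selection interacts with the degeneracy of the fiber over $0$, and getting this bookkeeping right is what pins down the correct dimension.

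Once these three properties plus the spanning count are in hand, Lemma~\ref{lem:sym-1}(ii) applies verbatim: $\cB$ is an orthonormal $\rho(\ft)$-eigenbasis fixed pointwise by $\ol\varphi_\k$, hence a symmetric basis for the corresponding unary irreducible representation, which is therefore symmetrizable. The main obstacle is not conceptual but combinatorial: it lies in the careful verification of orthonormality and the exact dimension count for the $h_{y,k,\e,\eta}$ at the boundary index $y = 0$, where the reflection $a \mapsto -a$ fixes part of the fiber and one must track precisely which combinations survive into the $\e$-eigenspace. The quadratic-module symmetry $\k$ itself is immediate, so — as with Proposition~\ref{prop:spc} — the entire proof collapses to bookkeeping once the right basis is written down, and the statement follows as claimed.
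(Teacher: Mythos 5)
Your proposal is correct and follows essentially the same route as the paper: the paper's own proof consists precisely of applying Lemma \ref{lem:sym-1} with the involution $\k : a \mapsto -a$ to the same candidate bases $\cF_\e$, $\cH_\e$ (plus the half-normalized vectors at $y=0$), after checking orthonormality, the $\ft$-eigenvector property via $Q(py+ap^{\ld-1}) = Q(py)$, and pointwise invariance under $\ol\varphi_\k$. Your additional remark about confirming the dimension count against the decomposition from \cite[Satz~8]{NW76} is a reasonable piece of diligence that the paper handles implicitly by citing Nobs--Wolfart for the bases, but it does not change the argument.
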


\subsection{Proof of Proposition \ref{prop:main} and applications}
We are now ready to prove Proposition \ref{prop:main}.
\begin{proof}[Proof of Proposition \ref{prop:main}]
According to \cite[Hauptsatz 2]{NW76} (see also the tables in \cite[pp.~521-525]{NW76}), every irreducible representation of $\qsl{p^\ld}$ is equivalent to one of the following: a standard irreducible representation, a special irreducible representation, a unary irreducible representation, or a tensor product of two representations of the above three types. Since symmetrizability is preserved under taking tensor product (see Remark \ref{rem:preservation}) and each of the first three types of representations is symmetrizable by Propositions \ref{prop:std}, \ref{prop:spc}, and \ref{prop:unary}, we are done.
\end{proof}

\begin{lemma}\label{lem:pure}
    Suppose $\rho$ is an irreducible, symmetric representation of $\SL$.  Then $\rho(\fs) = \tilde s$ or $i \cdot \tilde{s}$ for some real symmetric matrix $\tilde{s}$. 
\end{lemma}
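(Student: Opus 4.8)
The plan is to exploit the defining relations of $\SL$ together with the two structural constraints of a symmetric representation: $\rho(\fs)$ is symmetric and $\rho(\ft)$ is diagonal (after passing to a symmetric basis, which we may since $\rho$ is symmetric). The key algebraic input is that $\fs^2 = (\fs\ft)^3$ is central in $\SL$; in fact $\fs^2 = \smallmat{-1&0\\0&-1}$ is the unique nontrivial central element, and $\fs^4 = \id$. Since $\rho$ is irreducible and $\rho(\fs^2)$ is central in the image, Schur's lemma forces $\rho(\fs^2) = c\cdot\id$ for some scalar $c$ with $c^2 = 1$, so $c = \pm 1$. Thus $\rho(\fs)^2 = \pm\id$.

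The main thrust is then to analyze the symmetric unitary matrix $s \defeq \rho(\fs)$ satisfying $s^2 = \pm\id$. First I would treat the case $s^2 = \id$: here $s$ is a symmetric involution, so I would argue it must already be real. Indeed, $s$ unitary gives $s^{-1} = \ol{s}^\top = \ol{s}$ (using symmetry $s^\top = s$), while $s^2 = \id$ gives $s^{-1} = s$; combining these yields $\ol{s} = s$, so $s$ is real symmetric, and we may take $\tilde{s} = s$. In the case $s^2 = -\id$, the same computation gives $s^{-1} = \ol{s}$ and $s^{-1} = -s$, hence $\ol{s} = -s$, meaning $s$ is purely imaginary; then $\tilde{s} \defeq -i\cdot s = \ol{i\cdot s}^{\,}$ is real (it is symmetric since $s$ is, and real since $s$ is purely imaginary), and $s = i\cdot\tilde{s}$. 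This dichotomy is exactly the claimed conclusion.

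The crucial step, and the one I would present most carefully, is the reduction $s^2 = \pm\id$ via Schur's lemma. This requires knowing that $\fs^2$ is central in $\SL$ (standard) and that $\rho$ is irreducible (hypothesis), so that $\rho(\fs^2)$, commuting with the whole image $\rho(\SL)$, is a scalar. I would note that unitarity makes the scalar a root of unity, and the relation $\fs^4 = \id$ pins it to a fourth root of unity; combined with $\rho(\fs^2) = \rho(\fs)^2$ being the square of a symmetric matrix and the eigenvalue analysis, only $\pm\id$ survive. An alternative but equivalent route avoids invoking $\fs^4=\id$ directly: since $s$ is unitary and symmetric, $s\ol{s} = s s^{-1,\top}\cdots$ — but the cleanest path is simply $s^2 = \rho(\fs^2) = \pm\id$ from centrality, then the two-line conjugation-symmetry argument above.

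The step I expect to be the main (albeit mild) obstacle is being careful that \emph{symmetric} in the hypothesis already provides a basis in which $s$ is a genuine symmetric matrix, so that $s^\top = s$ holds on the nose and the unitarity relation $\ol{s}^\top = s^{-1}$ simplifies to $\ol{s} = s^{-1}$. Once that bookkeeping is in place, the real content is the Schur-lemma scalar computation; everything after it is the short real-versus-imaginary case split. I would structure the write-up as: (1) invoke centrality of $\fs^2$ and irreducibility to get $s^2 = c\,\id$, $c\in\{\pm 1\}$; (2) use symmetry plus unitarity to get $\ol{s} = s^{-1} = c^{-1}s = c\,s$; (3) conclude $s$ real when $c=1$ and $s$ purely imaginary when $c=-1$, setting $\tilde{s}=s$ or $\tilde{s}=-is$ respectively.
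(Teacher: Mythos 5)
Your proposal is correct and follows essentially the same route as the paper's proof: Schur's lemma applied to the central element $\fs^2$ gives $s^2 = \pm\id$ (pinned down by $\fs^4 = \id$), unitarity plus symmetry gives $\ol{s} = s^{-1}$, and the two cases yield $\tilde{s} = s$ or $\tilde{s} = -i\cdot s$ real. The only difference is expository; the paper phrases the case split via $\ol{s} = s^3$, which is the same computation.
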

\begin{proof}
    Denote $s \defeq \rho(\fs)$. Since $\rho$ is unitary and $s$ is symmetric, $s^{-1} = s^\dagger = \ol{s}$. Because $\fs^2$ is in the center of $\SL$, Schur's Lemma shows that $s^2 \in \BC \cdot\id$. Since $s^4 =\id$, $s^2 = \pm \id$ and  $\ol s = s^3$.  If $s^2 = \id$, then $\ol s = s$ and so $\tilde{s} \defeq s$ is real; otherwise, $(i \cdot s)^2 = \id$ and so $\tilde{s} \defeq -i \cdot s$ is real.
\end{proof}

\begin{cor}
    Every irreducible, congruence representation of $\SL$ is equivalent to a representation $\rho$ such that $\rho(\fs) = \tilde{s}$ or $i \cdot \tilde{s}$  for some real symmetric matrix $\tilde{s}$.
\end{cor}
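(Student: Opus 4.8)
The plan is to obtain this statement as an immediate consequence of the paper's main theorem together with Lemma \ref{lem:pure}. Let $\eta : \SL \to \GL(V)$ be an irreducible congruence representation. First I would invoke Theorem \ref{thm:main} to conclude that $\eta$ is symmetrizable; by the definition of symmetrizability, this means exactly that $\eta$ is equivalent to a symmetric representation $\rho : \SL \to \U(n)$, i.e.\ one for which $\rho(\fs)$ is symmetric and $\rho(\ft)$ is diagonal.

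Next I would note that irreducibility is an invariant of the equivalence class of a representation: since $\eta$ is irreducible and $\rho \cong \eta$, the representation $\rho$ is irreducible as well. Hence $\rho$ is an \emph{irreducible symmetric} representation of $\SL$, which is precisely the hypothesis of Lemma \ref{lem:pure}. Applying that lemma verbatim yields $\rho(\fs) = \tilde{s}$ or $\rho(\fs) = i\cdot\tilde{s}$ for some real symmetric matrix $\tilde{s}$. Since $\rho$ lies in the equivalence class of the original $\eta$, this exhibits a representative of that class with the asserted form of $\rho(\fs)$, which is what the corollary demands.

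I do not anticipate any real obstacle: the corollary is exactly the conjunction of Theorem \ref{thm:main} (which supplies a symmetric model within the equivalence class) and Lemma \ref{lem:pure} (which, given symmetry and irreducibility, forces $\rho(\fs)$ to be a real symmetric matrix up to the scalar $i$). The only step that deserves an explicit word is the passage of irreducibility from $\eta$ to the symmetric representation $\rho$ produced by Theorem \ref{thm:main}, and this is automatic because conjugation by the intertwining isomorphism carries invariant subspaces to invariant subspaces. Thus the proof is a two-line assembly rather than a computation.
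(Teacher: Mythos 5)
Your proposal is correct and is exactly the paper's argument: the paper also proves the corollary by combining Theorem \ref{thm:main} (symmetrizability of congruence representations) with Lemma \ref{lem:pure}, simply stating that it ``follows immediately.'' Your explicit remark that irreducibility passes to the equivalent symmetric representation is a detail the paper leaves implicit, but it is the same two-step assembly.
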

\begin{proof}
    This follows immediately Theorem \ref{thm:main} and Lemma \ref{lem:pure}.
\end{proof}

\bibliographystyle{abbrv}
\bibliography{Reference}

\begin{thebibliography}{10}

\bibitem{BK01}
B.~Bakalov and A.~Kirillov, Jr.
\newblock {\em Lectures on tensor categories and modular functors}, volume~21
  of {\em University Lecture Series}.
\newblock Amer. Math. Soc., Providence, RI, 2001.

\bibitem{Rank5}
P.~Bruillard, S.-H. Ng, E.~C. Rowell, and Z.~Wang.
\newblock On classification of modular categories by rank.
\newblock {\em Int. Math. Res. Not. IMRN}, 2016(24):7546--7588, 2016.

\bibitem{DLN15}
C.~Dong, X.~Lin, and S.-H. Ng.
\newblock Congruence property in conformal field theory.
\newblock {\em Algebra Number Theory}, 9(9):2121--2166, 2015.

\bibitem{EM471}
S.~Eilenberg and S.~MacLane.
\newblock Cohomology theory in abstract groups. {I}.
\newblock {\em Ann. of Math. (2)}, 48:51--78, 1947.

\bibitem{EM472}
S.~Eilenberg and S.~MacLane.
\newblock Cohomology theory in abstract groups. {II}. {G}roup extensions with a
  non-{A}belian kernel.
\newblock {\em Ann. of Math. (2)}, 48:326--341, 1947.

\bibitem{EGNO}
P.~Etingof, S.~Gelaki, D.~Nikshych, and V.~Ostrik.
\newblock {\em Tensor categories}, volume 205 of {\em Mathematical Surveys and
  Monographs}.
\newblock American Mathematical Society, Providence, RI, 2015.

\bibitem{ENO05}
P.~Etingof, D.~Nikshych, and V.~Ostrik.
\newblock On fusion categories.
\newblock {\em Ann. of Math. (2)}, 162(2):581--642, 2005.

\bibitem{FF20}
A.~Fiori and C.~Franc.
\newblock The unbounded denominator conjecture for the noncongruence subgroups
  of index $7$.
\newblock {\em J. Number Theory}, 2022.
\newblock doi:~10.1016/j.jnt.2021.11.014.

\bibitem{Hu}
Y.-Z. Huang.
\newblock Rigidity and modularity of vertex tensor categories.
\newblock {\em Commun. Contemp. Math.}, 10(suppl. 1):871--911, 2008.

\bibitem{JS85}
A.~Joyal and R.~Street.
\newblock Braided monoidal categories.
\newblock {\em Macquarie Math.~Reports}, NO.~850067, 1985.

\bibitem{JS93}
A.~Joyal and R.~Street.
\newblock Braided tensor categories.
\newblock {\em Adv. Math.}, 102(1):20--78, 1993.

\bibitem{Kit06}
A.~Kitaev.
\newblock Anyons in an exactly solved model and beyond.
\newblock {\em Ann. Physics}, 321(1):2--111, 2006.

\bibitem{Klo46}
H.~D. Kloosterman.
\newblock The behaviour of general theta functions under the modular group and
  the characters of binary modular congruence groups. {I}.
\newblock {\em Ann. of Math. (2)}, 47:317--375, 1946.

\bibitem{LW04}
M.~A. Levin and X.-G. Wen.
\newblock String-net condensation: A physical mechanism for topological phases.
\newblock {\em Physical Review B}, 71(4), Jan 2005.

\bibitem{Mac51}
G.~W. Mackey.
\newblock On induced representations of groups.
\newblock {\em Amer. J. Math.}, 73:576--592, 1951.

\bibitem{MS89}
G.~Moore and N.~Seiberg.
\newblock Classical and quantum conformal field theory.
\newblock {\em Comm. Math. Phys.}, 123(2):177--254, 1989.

\bibitem{NRWW}
S.-H. Ng, E.~C. Rowell, Z.~Wang, and X.-G. Wen.
\newblock Reconstruction of modular data from {SL}$_2(\mathbb{Z})$
  representations.
\newblock {\em {\rm math.QA}}, arXiv:2203.14829, 2022.

\bibitem{NS10}
S.-H. Ng and P.~Schauenburg.
\newblock Congruence subgroups and generalized {F}robenius-{S}chur indicators.
\newblock {\em Comm. Math. Phys.}, 300(1):1--46, 2010.

\bibitem{SL2Reps1.0}
S.-H. Ng, Y.~Wang, and S.~Wilson.
\newblock {SL2Reps}, {C}onstructing symmetric representations of
  $\text{SL}(2,\mathbb{Z})$, {V}ersion 1.0, Dec 2021.
\newblock GAP package. \href{https://snw-0.github.io/sl2-reps}
  {\texttt{https://snw-0.github.io/}\discretionary {}{}{}\texttt{sl2-reps}}.

\bibitem{NWZ22}
S.-H. Ng, Y.~Wang, and Q.~Zhang.
\newblock Modular categories with transitive {G}alois actions.
\newblock {\em Comm. Math. Phys.}, 390(3):1271--1310, 2022.

\bibitem{Nobs1}
A.~Nobs.
\newblock Die irreduziblen {D}arstellungen der {G}ruppen
  {$\text{SL}_{2}(\mathbb{Z}_{p})$}, insbesondere
  {$\text{SL}_{2}(\mathbb{Z}_{2})$}. {I}.
\newblock {\em Comment. Math. Helv.}, 51(4):465--489, 1976.

\bibitem{NW76}
A.~Nobs and J.~Wolfart.
\newblock Die irreduziblen {D}arstellungen der {G}ruppen
  {$\text{SL}_{2}(\mathbb{Z}_{p})$}, insbesondere
  {$\text{SL}_{2}(\mathbb{Z}_{2})$}. {II}.
\newblock {\em Comment. Math. Helv.}, 51(4):491--526, 1976.

\bibitem{RT91}
N.~Reshetikhin and V.~G. Turaev.
\newblock Invariants of {$3$}-manifolds via link polynomials and quantum
  groups.
\newblock {\em Invent. Math.}, 103(3):547--597, 1991.

\bibitem{RW18}
E.~C. Rowell and Z.~Wang.
\newblock Mathematics of topological quantum computing.
\newblock {\em Bull. Amer. Math. Soc. (N.S.)}, 55(2):183--238, 2018.

\bibitem{Ser77}
J.-P. Serre.
\newblock {\em Linear representations of finite groups}.
\newblock Springer-Verlag, New York-Heidelberg, 1977.
\newblock Translated from the second French edition by Leonard L. Scott,
  Graduate Texts in Mathematics, Vol. 42.

\bibitem{Tan67}
S.~Tanaka.
\newblock Irreducible representations of the binary modular congruence groups
  {${\rm mod}\,p^{\lambda }$}.
\newblock {\em J. Math. Kyoto Univ.}, 7:123--132, 1967.

\bibitem{Tur10}
V.~G. Turaev.
\newblock {\em Quantum invariants of knots and 3-manifolds}, volume~18 of {\em
  De Gruyter Studies in Mathematics}.
\newblock Walter de Gruyter \& Co., Berlin, revised edition, 2010.

\bibitem{Weil64}
A.~Weil.
\newblock Sur certains groupes d'op\'{e}rateurs unitaires.
\newblock {\em Acta Math.}, 111:143--211, 1964.

\bibitem{Z}
Y.~Zhu.
\newblock Modular invariance of characters of vertex operator algebras.
\newblock {\em J. Amer. Math. Soc.}, 9(1):237--302, 1996.

\end{thebibliography}

\end{document}